 \newtheorem{teor}{Theorem}[section]
 \newtheorem{prop}[teor]{Proposition}
 \theoremstyle{definition}
 \theoremstyle{remark}
  \newtheorem{obs}{Remark}[section]
\numberwithin{equation}{section}
\begin{document}

\title[Algebraic and qualitative aspects of quadratic V. fields and OP]{Algebraic and qualitative aspects of quadratic vector fields related with classical orthogonal polynomials}

\author[P. Acosta-Hum\'anez]{Primitivo B, Acosta-Hum\'anez}
\address[P. Acosta-Hum\'anez]{Instituto Superior de Formaci\'on Docente Salom\'e Ureña - ISFODOSU, Recinto Emilio Prud-Homme, Santiago de los Caballeros, Dominican Republic\\ Facultad de Ciencias B\'asicas y Biomedicas, Universidad Sim\'on Bol\'ivar, Barranquilla - Colombia}
\email{primitivo.acosta-humanez@isfodosu.edu.do}

\author[M. Campo D.]{Maria Campo Donado}
\address[M. Campo D.]{Facultad de Ciencias B\'asicas, programa de matem\'atica universidad del Atl\'antico, Barranquilla - Colombia}
\email{mcampodonado@mail.uniatlantico.edu.co}

\author[A. Reyes L.]{Alberto Reyes Linero}
\address[A. Reyes L.]{Programa de matem\'aticas- Universidad del Atl\'antico, Barranquilla - Colombia}
\email{areyeslinero@mail.uniatlantico.edu.co}

\author[J. Rodr\'iguez C.]{Jorge Rodr\'{i}guez Contreras}
\address[J. Rodr\'iguez C.]{Departamento de de matem\'atica y estad\'istica Universidad del Norte \& Programa de matem\'aticas- Universidad del Atl\'antico, Barranquilla - Colombia}
\email{jrodri@uninorte.edu.co - jorgelrodriguezc@mail.uniatlantico.edu.co}

\dedicatory{To Isaias Acosta-Hum\'anez Morillo with occasion of his first aniversary}
\maketitle
      
\begin{abstract}
This paper is a sequel of the reference \cite[\S 4.2, p.p. 1782--1783]{almp}, in where some families of quadratic polynomial vector fields related with orthogonal polynomials were studied. We extend such results that contain some details related with differential Galois Theory as well the inclusion of Darboux theory of integrability and qualitative theory of dynamical systems.\\

\noindent\footnotesize{\textbf{Keywords and Phrases}. \textit{Darboux first integrals, differential Galois theory, integrability, orthogonal polynomials, polynomials vector fields.}}\\

\noindent\footnotesize{\textbf{MSC 2010}. Primary 12H05; Secondary 34C99}
\end{abstract}

\section*{Introduction}

To study any process of variation with respect the time, the theory of dynamical systems has been developed, which is also endowed with algebraic and qualitative techniques, among others.  Although, in general case, it is not possible to find the solution of a differential equation that models an specific process, we can identify geometric structures influencing over qualitative properties such as; stability, invariant sets attractors, among others, see \cite{DM,120,121,119,PK,VDP} for further details. In the algebraic sense, E. Picard and E. Vessiot introduced an approach to study linear differential equations based on the Galois theory for polynomials (see \cite{HE}), which is known as differential Galois theory or also known Picard-Vessiot theory, see \cite{aclm,acpe2,crha,vasi} for further details. Also G. Darboux introduced an algebraic theory to analyse the integrability of polynomial vector fields, which is known as Darboux theory of integrability, see  \cite{LZ} and references therein. The final ingredient of this paper corresponds to orthogonal polynomials, see \cite{chihara,ismail}, which are very important in both theoretical and applied mathematics giving contributions to random matrices, approximation theory, trigonometric series and especially differential equations, among others. \\

Concerning to applications of differential Galois theory to dynamical systems, in \cite{mo,mora} were presented techniques to determine the non-integrability of Hamiltonian systems can be found in references \cite{almp2,almp,acmowe,acpan,mo,mora}, while in \cite{almp2,almp} were presented techniques to study planar polynomial vector fields. In the same way, applications to Quantum Mechanics can be found in \cite{acbook,acmowe}. Combinations of algebraic and qualitative techiques to study planar vector fields were presented in \cite{ARR,ARRg}. This paper is a sequel of \cite{almp}, in particular is an extension  of the section \S 4.2. We follow the same structure of the papers \cite{ARR,ARRg} concerning the algebraic and qualitative techniques to study the polynomial vector fields. We recall that for algebraic analysis, differential Galois theory and Darboux integrability, we consider vector fields over the complex numbers, while for qualitative analysis we consider the vector fields over the real numbers.

\section{Preliminaries}
In this section we present the basic theoretical background needed to understand the rest of the paper.
\subsection{Classical Orthogonal Polynomials}
The main object of study in this work are quadratic polynomial differential systems associated to classical orthogonal polynomials.
In particular we focus in the sequences of classical orthogonal polynomials of hypergeometric type, that is, orthogonal polynomials satisfying the differential equation
\begin{equation}
\rho(x)y''+\tau(x)y'+\lambda y=0 \label{HE},\end{equation}
where $\rho(x)$, $\tau(x)$ are polynomials and $\lambda$ depending on $n$ are given in the next table:

\begin{center}
	\begin{tabular}{| c | c | c | }
		\hline
		$\rho(x)$           & $\tau(x)$                          & $\lambda_{n}$            \\ \hline
		$1-x^{2}$           & $\beta-\alpha-(\alpha+\beta+2)x$      & $n(n+1+\alpha+\beta)$    \\ \hline
		$1-x^{2}$           &    $-2x $                             & $n(n+1)$                 \\ \hline
		$1-x^{2}$           & $-x $                                 & $n^{2}$                  \\ \hline
		$1-x^{2}$           & $ -3x $                               &$ n(n+2)$                 \\ \hline
		$1-x^{2}$           &  $-(2\alpha+1)x$                      & $n(n+1+2\alpha)$           \\ \hline
		$x $                &   $\alpha+1-x $                       &$ n $                     \\ \hline
		$x $                &  $ 1-x$                               &$ n $                     \\ \hline
		$ 1$                &  $ -2x$                               & $2n $                    \\
		\hline 
	\end{tabular}
\end{center}

Moreover, it is well known that classical orthogonal polynomials can be obtained by Rodrigues formula, see \cite{chihara,ismail}. In a general form, the constant $\lambda_n$ can be obtained as follows. $$ \lambda_{n}=-n\left( \tau'+\frac{n-1}{2} \rho ''(x) \right).$$
Thus, the object of study becomes the differential system

$$\begin{array}{rl}
\frac{d\nu}{dt}&=\frac{\lambda_{n}}{\mu}\rho+(\rho'-\tau)\nu+\mu \nu^2\\
\frac{dx}{dt}&=\rho
\end{array}$$
and its associated foliation becomes to
$$\frac{d\nu}{dx}=\frac{\lambda_{n}}{\mu}+\frac{\rho'-\tau}{\rho}\nu+\frac{\mu}{\rho} \nu^2.$$
 We claim that $\mu\neq 0$ because we are studying quadratic polynomial vector fields. 
  
%  
%Un campo vectorial $\chi$, es una funcion de clase $C^r$ donde $r\in \mathbb{N} \cup {\infty,\omega}$ (si $r=\omega$ la funcion se dice analitica) donde $\chi:\Delta \longrightarrow \mathbb{R}$ y $\Delta$ es un subconjunto abierto de $\mathbb{R}$
%
%$$\dot{x}=\chi(x)$$
%
%El objeto principal de estudio del presente trabajo son los sistemas polinomiales que se pueden asociar a polinomios ortognales clases. Las sucesiones de polinomios ortogonales clasicos satisfacen la ecuacion diferencial
%
%$$\rho(x) y''+K_1 P_1 y'+ \lambda_{n} y=0$$
%
%donde $\rho, K_1, P_1,$   estan determinados por cada familia de polinomios y 
%$$ \lambda_{n}=-n\left( K_1P_1'+\frac{n-1}{2} \rho ''(x) \right).$$
%
%Este este es uno d elos principales fundamentos del presente trabajo, dado que la anerior escuacion diferencial sunimistra informacion sobre el frlujo del sistema polinomial cuadratico, con $\mu>0$
%
%$$\begin{array}{rl}
%\frac{dv}{dt}&=\frac{\lambda_{n}}{\mu}\rho+(\rho'-K_1P_1)\nu+\mu \nu^2\\
%\frac{dx}{dt}&=\rho
%\end{array}$$
%
%dado que el flujo de este sistema viene dado por la ecuacion 
%
%$$\frac{dv}{dx}=\frac{\lambda_{n}}{\mu}+\frac{\rho'-K_1P_1}{\rho}\nu+\frac{\mu}{\rho} \nu^2.$$  
%  

\subsection{Critical Points}
We recall that a real vector field $\chi$ is a function of $C^r$  class where $r\in \mathbb{N} \cup {\infty,\omega}$  (if $r=\omega$ we say that the function is analytic). Moreover,  $\chi:\Delta \longrightarrow \mathbb{R}$ and $\Delta$ is an open subset of $\mathbb{R}$. For instance the differential system associated to the vector field $\chi$ is given by $\dot{x}=\chi(x)$. Now, based on the references \cite{DM,PK}, we present the classification of some critical points used in the main results of this paper. The following theorem is concerning to hyperbolic critical points.

\begin{teor}\label{teo hiper}
	Let $(0,0)$ be an isolated singular point of the vector field $X$ associated to
	\begin{equation}
	\label{sis hiper}
	\begin{array}{rl}
	\dot{v}&=av+bx+ A(v,x),\\
	\dot{x}&=cv+dx+ B(v,x),
	\end{array}
	\end{equation}
	where $A$ and $B$ are analytic in a neighborhood of the origin with $A(0,0)=B(0,0)=DA(0,0)=DB(0,0)=0$. Let $\lambda_{1}$ and $\lambda_{2}$ be an eigenvalue of the linear part $DX(0,0)$ of the system at the origin. Then the following statements hold.
	
	\begin{itemize}
		\item If $\lambda_{1}$ and $\lambda_{2}$ are real and $\lambda_{1}\lambda_{2}<0$, then $(0,0)$ is a saddle. If we denote by  $E_{1}$ and $E_{2}$ the eigenspaces of respectively $\lambda_{1}$ and $\lambda_{2}$ then one can find two invariant analytic curves, tangent respectively to $E_{1}$ and $E_{2}$ at $0$, on one of which points are attracted towards the origin, and on one of which points are repelled away from the origin .On these invariant curves $X$ is $C^{\omega}-$linearizable. There exist a $C^{\infty}$ coordinate change transforming 
		\eqref{sis hiper} into one of the following normal forms:
				$$\begin{array}{rl}
		\dot{v}&=\lambda_{1}v,\\
		\dot{x}&=\lambda_{2}x,
		\end{array}$$
				in the case $\lambda_{1}/\lambda_{2}\in \mathbb{R}\setminus\mathbb{Q}$, and
		$$\begin{array}{rl}
		\dot{v}&=v(\lambda_{1}+f(v^{k}x^{l})),\\
		\dot{x}&=x(\lambda_{2}+g(v^{k}x^{l})),
		\end{array}$$
				in the case $\lambda_{1}/\lambda_{2}=-k/l\in \mathbb{Q}$with $k,l\in \mathbb{N}$ and where $f,g$ are functions $C^{\infty}$. All systems \ref{sis hiper} are $C^{0}$-conjugate to
		$$\begin{array}{rl}
		\dot{v}&=v,\\
		\dot{x}&=-x.
		\end{array}$$
		
		\item If $\lambda_{1}$ and $\lambda_{2}$ are real with $|\lambda_{2}|\geq|\lambda_{1}|$ and $\lambda_{1}\lambda_{2}>0$, then $(0,0)$ is a node. If $\lambda_{1}>0$ (Respectively $<0$) then it is repelling or unestable (respectively attracting or stable). There exist a $C^{\infty}$ coordinate change transforming \ref{sis hiper} into
		$$\begin{array}{rl}
		\dot{x}&=\lambda_{1}x,\\
		\dot{y}&=\lambda_{2}y,
		\end{array}$$
		in case $\lambda_{1}/\lambda_{2}\not\in \mathbb{N}$, and into
		$$\begin{array}{rl}
		\dot{x}&=\lambda_{1}x,\\
		\dot{y}&=\lambda_{2}y+\eta x^{m},
		\end{array}$$
				for some $\eta=0$ or $1$, in case  $\lambda_{2}=m\lambda_{1}$ with $m\in \mathbb{N}$ and $m>1$. All systems are $C^{0}-$conjugate to
		$$\begin{array}{rl}
		\dot{x}&=\eta x,\\
		\dot{y}&=\eta y,
		\end{array}$$
		with $\eta= \pm 1$ and $\lambda_{1}\eta>0$.
		
		\item If $\lambda_{1}=\alpha+\beta i$ and $\lambda_{2}=\alpha-\beta i$ with $\alpha,\beta\neq0$ then $(0,0)$ is a "strong" focus. If $\alpha>0$ (respectively $\alpha<0$), it is repelling or unstable (respectively attracting or stable). There exists a $C^{\infty}$ coordinate change transforming \ref{sis hiper} into
		$$
		\begin{array}{rl}
		\dot{x}&=\alpha x+\beta y,\\
		\dot{y}&=-\beta x+\alpha y.
		\end{array}
		$$
		All systems \ref{semi hiper} are  $C^{0}-$conjugado to
		$$\begin{array}{rl}
		\dot{x}&=\eta x,\\
		\dot{y}&=\eta y,
		\end{array}$$
		with $\eta= \pm 1$ and $\alpha\eta>0$.
		
		\item If $\lambda_{1}=\beta i$ and $\lambda_{2}=-\beta i$ with $\beta\neq0$, then $(0,0)$ is a linear center topologically, a \emph{weak} focus or a center. 
	\end{itemize}
\end{teor}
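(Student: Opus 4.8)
The plan is to treat the four cases by reducing the system, via a linear change of coordinates, to a form in which the linear part $DX(0,0)$ is in its (real) Jordan normal form, and then to invoke the classical local theory of singular points: the stable/unstable manifold theorem for the invariant curves, the Poincar\'e--Dulac and Sternberg normal form/linearization theorems for the smooth reductions, and the Hartman--Grobman theorem for the topological ($C^0$) conjugacies. The organizing principle throughout is the \emph{resonance} relation $\lambda_i=m_1\lambda_1+m_2\lambda_2$ with $m_1,m_2\in\mathbb{Z}_{\geq 0}$ and $m_1+m_2\geq 2$, since it is precisely the presence or absence of resonances that governs whether the smooth normal form is purely linear or carries additional monomial terms.

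First, for the saddle case ($\lambda_1,\lambda_2$ real with $\lambda_1\lambda_2<0$), after diagonalizing the linear part I would apply the analytic stable/unstable manifold theorem to produce the two invariant analytic curves tangent to $E_1$ and $E_2$; on each such curve the field restricts to a one-dimensional analytic vector field with nonzero linear coefficient, which is $C^\omega$-linearizable by the one-dimensional Poincar\'e argument, giving the stated attraction/repulsion. For the smooth model I would analyze resonances: when $\lambda_1/\lambda_2\in\mathbb{R}\setminus\mathbb{Q}$ there are none, so Sternberg's theorem yields a $C^\infty$ linearization; when $\lambda_1/\lambda_2=-k/l\in\mathbb{Q}$ the resonant monomials are exactly the powers of $v^k x^l$, which produces the normal form with the functions $f,g$. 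The $C^0$-conjugacy to $\dot v=v,\ \dot x=-x$ is then immediate from Hartman--Grobman together with the sign pattern of the eigenvalues.

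For the node ($\lambda_1\lambda_2>0$, real) and the strong focus ($\lambda_{1,2}=\alpha\pm\beta i$, $\alpha\beta\neq 0$) the scheme is identical. In the node case the only possible resonance is $\lambda_2=m\lambda_1$ with integer $m>1$, which contributes the single resonant term $\eta x^m$ and otherwise gives a linear $C^\infty$ model; the sign of $\lambda_1$ fixes stability, and Hartman--Grobman yields the $C^0$-conjugacy to $\dot x=\eta x,\ \dot y=\eta y$. In the focus case the complex pair admits no real resonance, so after passing to the real Jordan form one reaches the rotation--dilation model $\dot x=\alpha x+\beta y,\ \dot y=-\beta x+\alpha y$, and again Hartman--Grobman furnishes the topological conjugacy with $\eta=\operatorname{sign}(\alpha)$.

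The delicate point, and the reason the last item is only a trichotomy rather than a decision, is the purely imaginary case $\lambda_{1,2}=\pm\beta i$. Here the origin is \emph{not} hyperbolic, so Hartman--Grobman does not apply and the linear part alone cannot distinguish a genuine center from a weak focus: this is the classical center--focus problem. Its resolution requires passing to the Poincar\'e return map on a transversal and computing the successive Lyapunov (focal) quantities from the higher-order terms $A,B$; the first nonzero such quantity decides the weak focus and its stability, while their simultaneous vanishing forces a center. I would therefore state only the trichotomy at this level. The main obstacle in the whole argument is thus twofold: securing the smooth ($C^\infty$) normal forms in the resonant subcases, which rests on the full strength of Sternberg's linearization theorem and a careful bookkeeping of resonant monomials, and the inherently higher-order nature of the center case, which cannot be settled by the linearization at all.
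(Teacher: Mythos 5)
Your outline is correct and coincides, step for step, with the argument in Dumortier--Llibre--Art\'es \cite{DM}, which is exactly where the paper sends the reader: Theorem~\ref{teo hiper} is stated here without proof, with the remark ``For complete study of these theorems see \cite{DM}.'' The ingredients you name---reduction to real Jordan form, the analytic stable/unstable manifold theorem with one-dimensional $C^{\omega}$-linearization on the invariant curves, Sternberg's smooth normal forms organized by the resonance relation $\lambda_i=m_1\lambda_1+m_2\lambda_2$ (yielding the $v^{k}x^{l}$ terms for the rational saddle and the $\eta x^{m}$ term for the resonant node), Hartman--Grobman for the $C^{0}$ models, and the Lyapunov-quantity trichotomy in the purely imaginary case---are precisely those of the cited proof, so your proposal is essentially the same approach.
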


The following theorem corresponds to Semi-hyperbolic critical points.

\begin{teor}

	Let $(0,0)$ be an isolated singular point of the vector field $X$ given by
	\begin{equation}
	\label{semi hiper}
	\begin{array}{rl}
	\dot{x} & =A(x,y)\\
	&\\
	\dot{y}& =\lambda y+B(x,y)
	\end{array}
	\end{equation}
	
	where $A$ and $B$ are analytic in a neighborhood of a origin with  $A(0,0)=B(0,0)=DA(0,0)=DB(0,0)=0$ and $\lambda>0$. Let $y=f(x)$ be the solution of equation $\lambda y+B(x,y)=0$ in a neighborhood of the point $(0,0)$, and supose that the function $g(x)=A(x,f(x))$ has the expression $g(x)=a_{m}x^{m}+o(x^{m})$ where $m\geq2$ and $a_{m}\neq0$. Then there always exists an invariant analytic curve, called the strong unstable manifold, tangent at $0$ to the $\mathbf{0}$ to the $y-$axis, on which $X$ is analytically conjugate to
	$$\frac{dx}{dt}=\lambda x;$$
	it represents repelling behavior since $\lambda>0$. Moreover the following statements hold.
	\begin{enumerate}
		\item [(i)] If $m$ id odd and $a_{m}<0$ then $(0,0)$ is a topologycal saddle. tangent to the $x-$axis there is a unique invariant $C^{\infty}$ curve, called the center manifold, on which $X$ is $C^{\infty}$-conjugate to
		$$\dot{x}=-x^{m}(1+ax^{m-1}),$$
		for some $a\in \mathbb{R}$. 
		
		If this invariant curve  is analytic, then on it $X$ is $C^{\infty}$-conjugate to
		$$\begin{array}{rl}
		\dot{x}&=-x^{m}(1+ax^{m-1}),\\
		\dot{y}&=\lambda y,
		\end{array}$$
		and is $C^{0}$-conjugate to
		$$\begin{array}{rl}
		\dot{x}&=-x,\\
		\dot{y}&= y.
		\end{array}$$

		\item [(ii)] if $m$ is odd and $a_{m}>0$, the origin is a unstable topological node. Every point not belonging to the strong unstable manyfold lies  on an invariant $C^{\infty}$ curve called a center manifold , tangent to the x-axis at the origin, and on which $X$ is a $C^{\infty}$-conjugate to
		$$\dot{x}=x^{m}(1+ax^{m-1}),$$
		for some $a\in \mathbb{R}$. All these center manifold are mutually infinitely tangent to each othe, and hence at most one of them ca be analytic, in which case $X$ is $C^{\infty}$-conjugate to
		$$\begin{array}{rl}
		\dot{x}&=x^{m}(1+ax^{m-1}),\\
		\dot{y}&=\lambda y,
		\end{array}$$
		and $C^{0}$-conjugate to
		$$\begin{array}{rl}
		\dot{x}&=x,\\
		\dot{y}&= y.
		\end{array}$$
		
		\item [(iii)] If $m$ is even, then $(0,0)$ is a saddle node, that is a singular point whose neigborhood is the union of one parabolic and two hiperbolic sectors. Modulo changing $x$ into $-x$, we suppose that $a_{m}> 0$. Every point to the right of the strong unstable manifold (side $x> 0$) lies on a invariant $C^{\infty}$ curve, called a center manifold, tangent to the x-axis at the origin, and on which case $X$ is a $C^{\infty}$-conjugate to
		\begin{equation}\nonumber
		\dot{x}=x^{m}(1+ax^{m-1}),
		\end{equation}
		for some $a \in \mathbb{R}$. All these center manifold coincide on the side $x\leq0$ and are hence infinitely tangent at the origin. At most one of these center manifolds can be analytic, in which case
		$X$ is $C^{\infty}$-conjugate to
		$$\begin{array}{rl}
		\dot{x}&=x^{m}(1+ax^{m-1}),\\
		\dot{y}&=\lambda y,
		\end{array}$$
		and is $C^{0}$-conjugate to
		$$\begin{array}{rl}
		\dot{x}&=x^{2},\\
		\dot{y}&=\lambda y.
		\end{array}$$
	\end{enumerate}
\end{teor}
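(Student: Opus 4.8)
The plan is to treat this as a semi\nobreakdash-hyperbolic singularity whose linear part $DX(0,0)$ has eigenvalues $0$ and $\lambda>0$, and to reduce the two\nobreakdash-dimensional analysis to a one\nobreakdash-dimensional flow on a center manifold, in the spirit of the reduction principle used in \cite{DM,PK}. First I would invoke the unstable manifold theorem for the hyperbolic direction: since the eigenvalue $\lambda>0$ is associated to the $y$\nobreakdash-axis, there is a one\nobreakdash-dimensional invariant analytic curve $W^{u}$ (the strong unstable manifold) tangent to the $y$\nobreakdash-axis at the origin. On $W^{u}$ the restricted field is a scalar analytic vector field $\dot{\xi}=\lambda\xi+O(\xi^{2})$ with $\lambda\neq0$, which is always analytically linearizable (in dimension one the single eigenvalue $\lambda$ lies in the Poincar\'e domain and no resonance $\lambda=n\lambda$ occurs for $n\geq 2$); hence $X|_{W^{u}}$ is analytically conjugate to $\dot{x}=\lambda x$, which repels since $\lambda>0$. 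This settles the first assertion.

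Next I would construct a center manifold. Because $\partial_{y}(\lambda y+B)(0,0)=\lambda\neq0$, the implicit function theorem yields an analytic solution $y=f(x)$ of $\lambda y+B(x,y)=0$ with $f(0)=f'(0)=0$, whose graph is tangent to the $x$\nobreakdash-axis and approximates the center manifold to high order. The center manifold theorem then provides a (generally only $C^{\infty}$, and in cases (ii)--(iii) non\nobreakdash-unique) invariant curve $y=h(x)$ tangent to the $x$\nobreakdash-axis, and the crucial computation is to verify that the reduced field $\dot{x}=A(x,h(x))$ has the same leading jet as $g(x)=A(x,f(x))=a_{m}x^{m}+o(x^{m})$; this holds because $h$ and $f$ have high enough contact at the origin that the $m$\nobreakdash-jet of $A(x,h(x))$ equals $a_{m}x^{m}$. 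The dynamics on the center manifold is therefore governed by a scalar field $\dot{x}=a_{m}x^{m}(1+\cdots)$, which a finite smooth normalization brings to $\dot{x}=\pm x^{m}(1+ax^{m-1})$ for a unique modulus $a\in\mathbb{R}$, exactly the normal form appearing in the statement.

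The classification then follows by combining this scalar flow with the transverse repelling factor $\dot{y}=\lambda y$, using that $X$ is locally $C^{0}$\nobreakdash-conjugate to the product of its center\nobreakdash-manifold flow with $\dot{y}=\lambda y$. If $m$ is odd the sign of $a_{m}x^{m}$ changes with $x$: when $a_{m}<0$ the origin attracts along the center manifold while repelling transversally, giving a topological saddle (case (i)); when $a_{m}>0$ it repels both along the center manifold and transversally, giving an unstable node (case (ii)). If $m$ is even then $x^{m}>0$ for $x\neq0$, so after normalizing $a_{m}>0$ by $x\mapsto-x$ the center\nobreakdash-manifold flow points in one direction on both sides of the origin, repelling on the side $x>0$ and attracting on the side $x<0$; combined with the transverse repulsion this yields one parabolic sector and two hyperbolic sectors, i.e.\ a saddle\nobreakdash-node (case (iii)). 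In each case I would read off the $C^{\infty}$ models, valid when the center manifold happens to be analytic, and the coarser $C^{0}$ models directly from this product structure.

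I expect the main obstacle to be the analytic and smooth bookkeeping around the center manifold: confirming that $g(x)=A(x,f(x))$ genuinely captures the reduced dynamics up to order $m$ (so that $a_{m}$ is the correct modulus), managing the non\nobreakdash-uniqueness and mere $C^{\infty}$\nobreakdash-regularity of the center manifold in cases (ii) and (iii), where the manifolds are mutually infinitely tangent, and lifting the one\nobreakdash-dimensional normal form back to a two\nobreakdash-dimensional conjugacy. The sector count for the saddle\nobreakdash-node is the most delicate point, and is where a blow\nobreakdash-up or a careful sectorial argument would be needed if one does not simply appeal to the reduction principle of \cite{DM,PK}.
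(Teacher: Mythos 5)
Your proposal is essentially correct, but there is nothing in the paper to compare it against: this theorem is quoted verbatim as background in the preliminaries and the paper gives no proof, deferring instead to the reference \cite{DM} (``For complete study of these theorems see \cite{DM}''). Your sketch reconstructs precisely the canonical argument from that source --- analytic unstable manifold plus one-dimensional Poincar\'e linearization for the strong unstable direction, center-manifold reduction, Takens' scalar $C^{\infty}$ normal form $\pm x^{m}(1+ax^{m-1})$, and the Shoshitaishvili reduction principle giving the local $C^{0}$ product of the center dynamics with $\dot{y}=\lambda y$, from which the saddle/node/saddle-node trichotomy and the sector count follow as you describe. The one step you assert without argument, that the $m$-jet of $A(x,h(x))$ along a center manifold $y=h(x)$ agrees with that of $g(x)=A(x,f(x))$, is indeed correct and closes by bootstrapping the invariance equation $h'(x)\,A(x,h(x))=\lambda h(x)+B(x,h(x))$ against $\lambda f(x)+B(x,f(x))=0$: from $A(x,h)=O(x^{k})$ one gets $h-f=O(x^{k+1})$, hence $A(x,h)-A(x,f)=O(x^{k+2})$ since $DA(0,0)=0$, and iterating past order $m$ shows $A(x,h)=a_{m}x^{m}+o(x^{m})$, so there is no gap, only a detail to write out.
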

The following theorem is concerning to Nilpotent singular points.
\begin{teor}
	let $(0,0)$ be an isolated singular point of the vector field $X$ given by  
	$$\begin{array}{rl}
	\dot{x}&=x+A(x,y),\\
	\dot{y}&=B(x,y),
	\end{array}$$
	where $A$ and $B$ are analytic in a neighborhood of the point $(0,0)$ and also $j_1A(0,0)=j_1B(0,0)=0$. Let $y=f(x)$ be the solution of the equtions $y+A(x,y)=0$ in a neighborhood of the point $(0,0)$, and consider $F(x)=B(x,f(x))$ and $G(x)=(\partial A/\partial v+\partial B/\partial x)(x,f(x))$. then the foollowing holds:
	\begin{itemize}
		\item [(i)] If $F(x)\equiv G(x)\equiv0$, then the phase portrait of $X$ is given by \ref{nilpotentes}$a$.
		\item [(ii)] Si $F(x)\equiv 0$ and $G(v)=bx^{n}+ o(x^{n})$ with $n \in \mathbb{N}$, $n\geq1$ and $b\neq0$, then the phase portrait of $X$ is given by \ref{nilpotentes}$b$ o $c$.
		\item [(iii)] If $G(v)\equiv0$ and $F(x)=ax^{m}+ o(x^{m})$ with $m \in \mathbb{N}$, $m\geq1$ and $a\neq0$, then
		\begin{itemize}
			\item If $m$ is odd and $a>0$, then the origin is a saddle (\ref{nilpotentes}$d$) and if $a<0$, then it is a center or focus ( \ref{nilpotentes}$e-f$).
			\item If $m$ is even the origin of $X$ is a cusp (\ref{nilpotentes}$h$).
		\end{itemize}
		\item [(iv)] If $F(x)=ax^{m}+ o(x^{m})$ and $G(x)=bx^{n}+ o(x^{n})$ with $m,n \in \mathbb{N}$, $m\geq1$, $n\geq1$ and $a\neq0$, $b\neq0$, then we have
		\begin{itemize}
			\item If $m$ is even,  and
			\begin{itemize}
				\item $m<2n+1$, then the origin of is a cusp \ref{nilpotentes}$h$
				\item $m>2n+1$, then the origin is a saddle-node \ref{nilpotentes}$i$ or $j$
			\end{itemize}
			
			\item If $m$ is odd and $a>0$ then the origin is a saddle \ref{nilpotentes}$d$.
			\item If $m$ is odd, $a<0$  and
			\begin{itemize}
				\item  Either $m<2n+1$, or $m=2n+1$ and $b^{2}+4a(n+1)<0$, then the origin is a center or focus (figure \ref{nilpotentes}$e$, $g$).
				\item If $n$ is odd and either $m>2n+1$, or $m=2n+1$ and $b^{2}+4a(n+1)\geq0$ then the phase portrait of the origin of $X$ consist of one hyperbolcic and one ellyptic as in figure (\ref{nilpotentes}$k$).
				\item $n$ is even and either  $m>2n+1$, or $m>2n+1$ and  $b^{2}+4a(n+1)\geq0$ then the origin of $X$ is a node as in figure \ref{nilpotentes}$l$, $m$. The node is attracting if $b<0$ and repelling if $b>0$.
			\end{itemize}
		\end{itemize}
	\end{itemize}
\end{teor}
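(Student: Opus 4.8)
The plan is to treat this as the classical Andreev classification of nilpotent planar singularities and to prove it by desingularization, leaning on the two preceding theorems. First I would read the normal form as $\dot x = y + A(x,y)$, $\dot y = B(x,y)$, so that $DX(0,0)=\left(\begin{smallmatrix}0&1\\0&0\end{smallmatrix}\right)$ is genuinely nilpotent (the printed $\dot x = x + A$ and the $\partial A/\partial v$ appear to be typographical slips for $y+A$ and $\partial A/\partial x$). Since $\partial_y(y+A)|_{0}=1\neq 0$, the implicit function theorem produces a unique analytic $y=f(x)$ with $f(0)=f'(0)=0$ solving $y+A=0$; this is precisely the curve on which $\dot x$ vanishes. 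Straightening it by $Y=y-f(x)$ puts the field in the Li\'enard-type form $\dot x = Y\,(1+O(x,Y))$, $\dot Y = F(x)+Y\,G(x)+O(Y^2)$, in which $F(x)=B(x,f(x))$ is the transverse velocity along the curve and $G(x)=(\partial_x A+\partial_y B)(x,f(x))$ is the divergence restricted to it.

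The core of the argument is a quasi-homogeneous blow-up governed by the Newton diagram of $F$ and $G$. Writing $F(x)=a x^m+\cdots$ and $G(x)=b x^n+\cdots$, a weight balance (assigning weights $2$ and $m+1$ to $x$ and $Y$) shows that the damping term $b x^n Y$ competes with the force term $a x^m$ exactly when $m\geq 2n+1$; this single inequality explains the dichotomy between $m<2n+1$ and $m>2n+1$ and the resonant threshold $m=2n+1$ that organizes all four cases of the statement. Blowing up the origin in directional charts then resolves the nilpotent point into finitely many singularities sitting on the exceptional divisor, whose linear parts are no longer nilpotent.

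At these divisor singularities I would invoke the hyperbolic and semi-hyperbolic theorems already stated: each such point is a saddle, node, or saddle-node whose local portrait (and its hyperbolic, parabolic, and elliptic sectors) is thereby known. The parities of $m$ and $n$ and the sign of $a$ dictate how many divisor singularities arise and of which type, while in the resonant case $m=2n+1$ the characteristic equation of the blown-up field is a quadratic whose discriminant is exactly $b^2+4a(n+1)$; its sign is what separates the node and saddle-node branches from the center-or-focus branch. For the force-dominated subcases one also checks directly that the principal part $\ddot x = a x^m$ is a nonlinear center when $m$ is odd and $a<0$ (a potential well) and a cusp or saddle otherwise, matching figures \ref{nilpotentes}$d$--$h$. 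Blowing down and gluing the sectors reproduces the remaining figures.

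I expect the main obstacle to be the blow-down bookkeeping: correctly matching the separatrices and fusing the hyperbolic, parabolic, and elliptic sectors coming from the several divisor singularities into one coherent germ, together with the degenerate end charts of the blow-up. A deliberate simplification is that the statement never separates \emph{center} from \emph{focus}: that is the classical center--focus problem, which the finite data $(m,n,a,b)$ cannot decide, so I would only certify that the principal quasi-homogeneous monodromy is of center type and leave the dichotomy ``center or focus'' as is.
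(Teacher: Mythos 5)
There is nothing in the paper to compare your attempt against: this theorem is quoted purely as background (it is Andreev's classification of nilpotent singular points) and the authors give no proof, writing only ``For complete study of these theorems see \cite{DM}.'' Measured against the proof in that cited source, your outline follows essentially the canonical route, and its key quantitative checkpoints are right: the repair of the misprints ($\dot{x}=y+A(x,y)$ and $\partial A/\partial y$ in place of $\partial A/\partial v$, without which the linear part would not be nilpotent and the curve $y=f(x)$ would not exist as claimed); the implicit-function reduction to a Li\'enard-type form; the quasi-homogeneous weights $(2,m+1)$ for $(x,Y)$, under which $Y\partial_x$ and $ax^{m}\partial_Y$ both have degree $m-1$ while $bx^{n}Y\partial_Y$ has degree $2n$, so the damping term is subordinate, balanced, or dominant according as $m<2n+1$, $m=2n+1$, or $m>2n+1$, which is exactly the dichotomy organizing case (iv); and, in the resonant case, the invariant directions $Y=cx^{n+1}$ satisfying $(n+1)c^{2}-bc-a=0$, whose discriminant is precisely the $b^{2}+4a(n+1)$ of the statement. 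Your center--focus caveat is also correct: the theorem only asserts ``center or focus,'' and the finite data $(m,n,a,b)$ cannot decide more.

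Two caveats keep your text a strategy rather than a proof. First, the blow-up scheme does not cover cases (i) and (ii): there $F\equiv 0$ means $\dot{x}=\dot{y}=0$ along the entire curve $y=f(x)$, so the origin is not an isolated singularity at all, and the portraits of Figure \ref{nilpotentes}$a$--$c$ come from analyzing the flow near a curve of singular points, not from desingularizing an isolated nilpotent point. Second, as you yourself anticipate, the bulk of the work in \cite{DM} is exactly the bookkeeping you defer: computing the divisor singularities chart by chart, running the parity analysis in $m$ and $n$ and the sign analysis in $a$ and $b$ that distributes the cases among Figure \ref{nilpotentes}$d$--$m$, checking that each divisor singularity is hyperbolic or semi-hyperbolic so that the two preceding theorems apply, and gluing the sectors under blow-down. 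Since the paper itself only cites the result, your blind reconstruction is measured against the proof in the cited reference, and at the level of strategy it matches it faithfully; to count as a proof it would need the case-by-case blow-up computations and the degenerate cases (i)--(ii) carried out.
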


\begin{figure}[h]
	% Requires \usepackage{graphicx}
	\begin{center}
		\includegraphics[width=10cm]{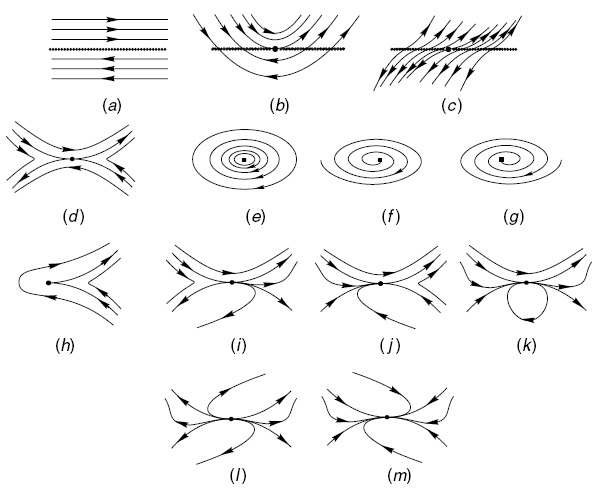}\\
		\caption{Portraits of phase for \ref{sis retratos2}, \cite{DM}}\label{nilpotentes}
	\end{center}
\end{figure}

For complete study of these theorems see \cite{DM}.

\subsection{Invariants Curves}
Let be the differential polynomial complex system
\begin{equation}
\begin{array}{rl}\label{S}
\dot{x}&=P(x,y),\\
\dot{y}&=Q(x,y),
\end{array}
\end{equation}
and $m=max\{deg P, deg Q\}$.

\begin{teor}\label{prop de darboux}
	
	Suppose that a $\mathbb{C}-$polynomial system (\ref{S}) of degree $m$ admits $p$ irreducible invariant algebraic curves $f_{i}=0$ with cofactors $K_{i}=1,2,...,p$;  $q$ exponential factors $exp(g_{i}/h_{i})$ with cofactors $L_{j}$, $j=1,2,...,q$, and $r$ independent singular points $(x_k,y_k)\in\mathbb{C}^2 $ such that $f_i(x_k,y_k)\neq 0)$ then if there exits $\lambda_{i}, \mu_{j}\in\mathbb{C}$ no not all zero such that
	$$\sum_{i=0} ^{p}\lambda_{i}K_{i}+\sum_{j=0}^{q}\mu_{j}L_{j}=-s$$
	
	for some $s\in\mathbb{C}\backslash \{0\}$, then the (multivalued) function
	$$f_{1}^{\lambda_{1}}...f_{p}^{\lambda_{p}}F_{1}^{\mu_{1}}...F_{q}^{\mu_{q}}e^{st}$$
	is an invariant of system (\ref{S})
\end{teor}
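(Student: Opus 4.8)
The plan is to verify directly that the candidate function
$$H = f_{1}^{\lambda_{1}}\cdots f_{p}^{\lambda_{p}}F_{1}^{\mu_{1}}\cdots F_{q}^{\mu_{q}}e^{st}, \qquad F_{j}=\exp(g_{j}/h_{j}),$$
is constant along the trajectories of \eqref{S}, by showing that its logarithmic derivative along the flow vanishes identically. I would carry out the computation on the open set $U=\{(x,y):f_{1}\cdots f_{p}\neq 0\}$, where $H$ is a nonvanishing analytic function (branchwise well defined), and then extend the conclusion by analyticity and continuity.

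First I would record the defining cofactor relations, which are precisely the hypotheses. Writing $\mathcal{X}=P\,\partial_{x}+Q\,\partial_{y}$ for the derivation along the vector field, each invariant algebraic curve satisfies $\mathcal{X}f_{i}=K_{i}f_{i}$ with $\deg K_{i}\le m-1$, and each exponential factor satisfies $\mathcal{X}F_{j}=L_{j}F_{j}$ with $\deg L_{j}\le m-1$; equivalently $\mathcal{X}f_{i}/f_{i}=K_{i}$ and $\mathcal{X}F_{j}/F_{j}=L_{j}$ on $U$.

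Second, I would compute the total derivative of $H$ along the flow, noting that $H$ depends on $t$ explicitly only through $e^{st}$, so that
$$\frac{dH}{dt}=\mathcal{X}H+\frac{\partial H}{\partial t}.$$
Using the product form of $H$ and taking logarithmic derivatives gives
$$\frac{1}{H}\frac{dH}{dt}=\sum_{i=1}^{p}\lambda_{i}\frac{\mathcal{X}f_{i}}{f_{i}}+\sum_{j=1}^{q}\mu_{j}\frac{\mathcal{X}F_{j}}{F_{j}}+s=\sum_{i=1}^{p}\lambda_{i}K_{i}+\sum_{j=1}^{q}\mu_{j}L_{j}+s,$$
where the additive $s$ is the contribution of $\partial_{t}e^{st}/e^{st}$. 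Invoking the standing hypothesis $\sum_{i}\lambda_{i}K_{i}+\sum_{j}\mu_{j}L_{j}=-s$, the right-hand side collapses to $-s+s=0$, so $dH/dt\equiv 0$ on $U$ and $H$ is a (time-dependent, in general multivalued) first integral of \eqref{S}.

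I expect the only delicate points to be bookkeeping ones rather than a genuine obstacle. Because the exponents $\lambda_{i},\mu_{j}\in\mathbb{C}$ need not be integers, $H$ is multivalued, so the logarithmic-derivative manipulation must be read branchwise on $U$; once $dH/dt=0$ is established there, each local branch is constant along trajectories and the invariance assertion is unambiguous. I would also note that the step $\mathcal{X}F_{j}=L_{j}F_{j}$ with $L_{j}$ a \emph{polynomial} of degree at most $m-1$ is built into the definition of an exponential factor (it is where the requirement that $h_{j}$ be a product of invariant curves enters), and is simply invoked here. Finally, the $r$ independent singular points appearing in the hypotheses are not used in this direction of the argument: they belong to the companion results that guarantee the \emph{existence} of a nontrivial cofactor relation, whereas the conclusion above follows from that relation alone.
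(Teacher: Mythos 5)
Your argument is correct, but there is a wrinkle in the comparison: the paper does not actually prove Theorem~\ref{prop de darboux}. It is quoted verbatim (with some typographical slips) from the literature, and the text immediately after it defers to \cite{DM} (\S 8, p.\ 219) for the proof. Your logarithmic-derivative computation --- setting $H=f_{1}^{\lambda_{1}}\cdots f_{p}^{\lambda_{p}}F_{1}^{\mu_{1}}\cdots F_{q}^{\mu_{q}}e^{st}$, using the cofactor relations $\mathcal{X}f_{i}=K_{i}f_{i}$ and $\mathcal{X}F_{j}=L_{j}F_{j}$ together with $\partial_{t}e^{st}/e^{st}=s$, so that $\frac{1}{H}\frac{dH}{dt}=\sum_{i}\lambda_{i}K_{i}+\sum_{j}\mu_{j}L_{j}+s=-s+s=0$ --- is precisely the standard proof given in that reference, so you have in effect supplied the argument the authors rely on rather than an alternative to it. Your side remarks are also accurate and worth keeping: the multivaluedness is harmless because the computation is branchwise on the open set where the $f_{i}$ (and the denominators $h_{j}$) do not vanish; and the hypothesis about the $r$ independent singular points is genuinely unused in this implication --- it is vestigial in the statement as transcribed, inherited from the full Darboux theorem in \cite{DM}, where singular points $(x_k,y_k)$ with $f_i(x_k,y_k)\neq 0$ force every cofactor to vanish at those points and thereby lower the number of invariant curves and exponential factors needed to guarantee that a relation $\sum_{i}\lambda_{i}K_{i}+\sum_{j}\mu_{j}L_{j}=-s$ exists at all. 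The only caution is editorial rather than mathematical: the paper's statement has index typos (sums starting at $0$, ``$K_{i}=1,2,\ldots,p$'' in place of $i=1,\ldots,p$), which you correctly read through in the intended sense.
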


For a complete version if this theorem see \cite{DM}[\S 8, p.p. 219].

The following theorems concern to singular points at infinity, where $x=\frac{X}{Z}$ and $y=\frac{Y}{Z}$.

\begin{teor}\label{icp1}
	The critical points at infinity for the mth degre polynomial system (\ref{S}) occur at the points $(X,Y,0)$ over the equator of the Poincar\'e sphere, being  $X^2+Y^2=1$ and $$ XQ_m(X,Y)-YP_m(X,Y)=0.$$
\end{teor}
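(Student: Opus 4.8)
The plan is to realize the system (\ref{S}) on the Poincar\'e sphere and to read off the singular behaviour at infinity inside a local chart. First I would fix the homogeneous decomposition $P=\sum_{j=0}^{m}P_{j}$ and $Q=\sum_{j=0}^{m}Q_{j}$, where each $P_{j},Q_{j}$ is homogeneous of degree $j$ and $P_{m},Q_{m}$ are exactly the top-degree parts appearing in the statement. I would then recall the central projection of $\mathbb{R}^{2}$ onto $S^{2}=\{(X,Y,Z):X^{2}+Y^{2}+Z^{2}=1\}$ determined by $x=X/Z$, $y=Y/Z$, under which the line at infinity becomes the equator $Z=0$, and cover the sphere by the usual charts. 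The essential computation takes place in the chart $U_{1}=\{X>0\}$ with local coordinates $(u,z)$ given by $u=Y/X=y/x$ and $z=Z/X=1/x$, so that $x=1/z$ and $y=u/z$.

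Next I would transport the field $X=P\,\partial_{x}+Q\,\partial_{y}$ through this change of variables and rescale time by the factor $z^{m-1}$ in order to clear the denominators produced by the substitution. A direct computation gives the resulting polynomial system
\begin{align*}
\dot{u} &= z^{m}\bigl[\,-u\,P(1/z,u/z)+Q(1/z,u/z)\,\bigr],\\
\dot{z} &= -z^{m+1}\,P(1/z,u/z).
\end{align*}
The decisive algebraic fact is that, by homogeneity, $z^{j}P_{j}(1/z,u/z)=P_{j}(1,u)$, whence $z^{m}P(1/z,u/z)=\sum_{j=0}^{m}z^{m-j}P_{j}(1,u)$ and similarly for $Q$; these are genuine polynomials in $(u,z)$, so the transformed field is regular along $z=0$. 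Evaluating at the equator $z=0$ annihilates every term except $j=m$, leaving $z^{m}P(1/z,u/z)\big|_{z=0}=P_{m}(1,u)$ and $z^{m}Q(1/z,u/z)\big|_{z=0}=Q_{m}(1,u)$.

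From here I would isolate the critical-point condition. Because $\dot{z}=-z^{m+1}P(1/z,u/z)$ retains an extra factor of $z$, it vanishes identically on $z=0$; hence the equator is invariant and a point $(u,0)$ is a singular point of the compactified field precisely when $\dot{u}=0$ there, that is, when $-u\,P_{m}(1,u)+Q_{m}(1,u)=0$. Translating back through $u=Y/X$ and the homogeneity identities $P_{m}(1,Y/X)=X^{-m}P_{m}(X,Y)$, $Q_{m}(1,Y/X)=X^{-m}Q_{m}(X,Y)$, and multiplying by $X^{m+1}>0$, this becomes exactly
$$
X\,Q_{m}(X,Y)-Y\,P_{m}(X,Y)=0,
$$
with $(X,Y,0)$ on the equator $X^{2}+Y^{2}=1$, which is the asserted relation.

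Finally I would dispose of the two points $(0,\pm 1,0)$ that escape the chart $U_{1}$. Repeating the same computation in the symmetric chart $U_{2}=\{Y>0\}$, with $v=X/Y$ and $z=1/y$, produces the companion equation $-v\,Q_{m}(v,1)+P_{m}(v,1)=0$, which upon clearing by $Y^{m+1}$ is again $X\,Q_{m}-Y\,P_{m}=0$; thus the two charts agree on their overlap and together exhaust the equator. I expect the only real obstacle to be bookkeeping rather than conceptual: pinning down the exact power $z^{m-1}$ in the time rescaling so that the field is polynomial and nonsingular along $z=0$, and checking that the surplus factor of $z$ in $\dot{z}$ is precisely what forces the equator to be invariant and collapses the test for a point at infinity to the single scalar equation $XQ_{m}-YP_{m}=0$.
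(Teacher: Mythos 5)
Your proof is correct, but there is in fact nothing in the paper to compare it against: the paper states this theorem as imported background and gives no proof, deferring instead to \cite{DM,PK} (``This theory can be study in detail on \cite{DM,PK}''). Your derivation is precisely the standard one from Perko's book, and it is consistent with the machinery the paper quotes immediately afterwards: your transformed system
\begin{align*}
\dot{u} &= z^{m}\bigl[-u\,P(1/z,u/z)+Q(1/z,u/z)\bigr], & \dot{z} &= -z^{m+1}P(1/z,u/z)
\end{align*}
is, up to renaming $u$ as $y$ and an overall sign, exactly the paper's system (\ref{S1}), and your second chart reproduces (\ref{S2}). Two small points deserve explicit mention, though neither affects correctness. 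First, the time rescaling by $z^{m-1}$ degenerates on $z=0$ and reverses orientation where $z^{m-1}<0$; this is exactly why the paper's (\ref{S1})--(\ref{S2}) carry the $\pm$ signs determined by the flow on the equator. Since you only hunt for the zero set of the compactified field, this is harmless --- for $z\neq0$ a nonvanishing rescaling does not change zeros, and on $z=0$ the rescaled field is by definition the compactified flow --- but it is worth one sentence so that the reader does not confuse ``same critical points'' with ``same flow direction.'' Second, your step ``multiplying by $X^{m+1}>0$'' is literally valid only in the chart $X>0$; since critical points at infinity come in antipodal pairs, at points with $X<0$ you should say instead that $X^{m+1}\neq0$ suffices, as multiplication by any nonzero scalar leaves the locus $XQ_{m}(X,Y)-YP_{m}(X,Y)=0$ unchanged. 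With those two remarks added, your argument is a complete and correct proof of the statement, and indeed supplies a proof the paper itself omits.
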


\begin{teor}
The flow defined in a neighborhood of any critical point of (\ref{S}) (with mentioned change of variable) over the equator of the Poincar\'e sphere $S^2$, except the points $(0,\pm 1,0)$, is topologically equivalent to the flow defined by the system:

\begin{equation}
\begin{array}{rl}\label{S1}
\pm \dot{y}&=yz^mP(\frac{1}{z},\frac{y}{z})-z^mQ(\frac{1}{z},\frac{y}{z}),\\
\pm \dot{z}&=z^{m+1}P(\frac{1}{z},\frac{y}{z}),
\end{array}
\end{equation}
being the signs determined by the flow on the equator of $S^2$ such as was determined in Theorem \ref{icp1}. Similarly, the flow defined by  (\ref{S}) (with the mentioned change of variable) in a neighborhood of any critical point of  (\ref{S}) on the equator of $S^2$ except the points $(\pm 1,0,0)$ is topologically equivalent to the flow defined by the system:

\begin{equation}
\begin{array}{rl}\label{S2}
\pm \dot{x}&=xz^mQ(\frac{x}{z},\frac{1}{z})-z^mP(\frac{x}{z},\frac{1}{z}),\\
\pm \dot{z}&=z^{m+1}P(\frac{1}{z},\frac{y}{z}),
\end{array}
\end{equation}
\end{teor}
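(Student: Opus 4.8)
The plan is to realize the compactification of the polynomial system (\ref{S}) on the Poincar\'e sphere $S^2=\{(X,Y,Z):X^2+Y^2+Z^2=1\}$ and to read off the induced flow near the equator $Z=0$ through explicit coordinate charts. First I would recall the central (gnomonic) projection that identifies the plane with the tangent plane to $S^2$ at the north pole: a point $(x,y)$ corresponds to the two intersections of the line through the origin and $(x,y,1)$ with $S^2$, so that $x=X/Z$, $y=Y/Z$ and the equator $Z=0$ is exactly the locus of points at infinity. Since no single chart covers the equator, the behaviour near a point $(X,Y,0)$ with $X\neq 0$ (equivalently, away from $(0,\pm1,0)$) is studied in the chart obtained by projecting onto the tangent plane $X=1$; this amounts to the birational change of variables $x=1/z$, $y=\eta/z$, where $(\eta,z)$ are the new coordinates and $z\to 0$ records approach to infinity.

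The core computation is a chain-rule calculation. Writing $z=1/x$ and $\eta=y/x=yz$ and differentiating along (\ref{S}), I obtain $\dot z=-z^2P(1/z,\eta/z)$ and $\dot \eta=z\,[\,Q(1/z,\eta/z)-\eta\,P(1/z,\eta/z)\,]$. Because $\deg P,\deg Q\le m$, the substitution $x=1/z$, $y=\eta/z$ produces denominators that are cleared exactly by a factor $z^m$; multiplying the vector field by $z^{m-1}$ (equivalently, rescaling time by $dt=z^{m-1}\,d\tau$) turns both right-hand sides into polynomials in $(\eta,z)$ that coincide, up to the common sign addressed below, with those in (\ref{S1}) after renaming $\eta$ as $y$. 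The identical argument with the roles of $X$ and $Y$ interchanged, using the chart $y=1/z$, $x=x/z$ valid for $Y\neq0$ (away from $(\pm1,0,0)$), yields (\ref{S2}).

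The remaining point is the sign. The rescaling factor $z^{m-1}$ is orientation-preserving where it is positive and orientation-reversing where it is negative, which is why both equations carry a $\pm$; the correct sign on each connected piece of the equator is the one making the induced orientation agree with the direction of the flow along the equator already pinned down in Theorem \ref{icp1} through the expression $XQ_m-YP_m$. Since each chart map is an analytic diffeomorphism onto its image and the time change is a strictly monotone reparametrization on that piece, orbits are carried to orbits with matching orientation; hence the local flow of (\ref{S}) near each equatorial critical point is topologically equivalent to that of (\ref{S1}) (respectively (\ref{S2})), and the two excluded pairs of points are precisely the poles of the two charts, each covered by the other.

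I expect the main obstacle to be the bookkeeping around the time rescaling: one must check that multiplication by $z^{m-1}$ neither creates nor destroys singularities on $z=0$ (it only reparametrizes the orbits lying there) and must track how the sign of $z^{m-1}$, which depends on the parity of $m$ and on the hemisphere, interacts with the orientation fixed in Theorem \ref{icp1}, so that genuine topological equivalence holds rather than mere conjugacy up to an unsigned time change.
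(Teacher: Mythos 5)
Your proposal is correct and follows essentially the same route as the source the paper relies on: the paper itself gives no proof of this theorem but defers to \cite{DM,PK} (it is Theorem 1 of \S 3.10 in Perko), where exactly your argument appears --- central projection onto the sphere, the two charts $x=1/z$, $y=\eta/z$ and the symmetric one with $X$ and $Y$ interchanged, the chain-rule computation, and the rescaling of time by $z^{m-1}$, whose sign (depending on the parity of $m$ and the hemisphere) is what produces the $\pm$ and is fixed by the equatorial flow of Theorem \ref{icp1}. Note moreover that carrying out your ``roles of $X$ and $Y$ interchanged'' computation honestly gives $\pm\dot z = z^{m+1}Q\left(\frac{x}{z},\frac{1}{z}\right)$ in the second chart, so your derivation in fact corrects a copying slip in the paper's statement of (\ref{S2}), whose second equation repeats the expression from (\ref{S1}).
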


the signs being determined by the flow on the equator of $S^2$ as determined in the theorem (\ref{icp1}).

This theory can be study in detail on \cite{DM,PK}.

\section{Main Results}
In this section we set the main results of the paper.
We start presenting some results of orthogonal polynomials theory from a Galoisian point of view. The following proposition relates the classical Galois theory with orthogonal polynomials.

\begin{prop}
	If $P_{n}$ an orthogonal polynomial, then for the splitting field of the polynomial $P_{n}(x)$ over $\mathbb{R}$, ($\mathbb{R}\{P_{n(x)}\}$); we have that
	$\mathbb{R}\{P_{n(x)}\}=\mathbb{R}$.
\end{prop}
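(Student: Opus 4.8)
The plan is to observe that the assertion $\mathbb{R}\{P_n\}=\mathbb{R}$ is precisely the statement that every root of $P_n$ is real, since the splitting field of a polynomial over $\mathbb{R}$ coincides with $\mathbb{R}$ exactly when the polynomial factors into linear terms over $\mathbb{R}$. Thus the proposition reduces to the classical fact that the zeros of an orthogonal polynomial are all real (indeed simple and contained in the interval of orthogonality). I would establish this from the defining orthogonality relation rather than from the explicit table: each classical family satisfies
$$\int_a^b P_m(x)\,P_n(x)\,w(x)\,dx = 0 \qquad (m\neq n),$$
for an appropriate positive weight $w$ on an interval $(a,b)$, so that $P_n$ is orthogonal to every polynomial of degree strictly less than $n$.

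The core of the argument is a sign-change count. Taking $P_0$ constant first shows that $\int_a^b P_n(x)\,w(x)\,dx=0$, so $P_n$ must change sign in $(a,b)$ and hence has at least one real zero of odd multiplicity there. Let $x_1,\dots,x_k$ be the points of $(a,b)$ at which $P_n$ changes sign, and suppose for contradiction that $k<n$. Setting $q(x)=(x-x_1)\cdots(x-x_k)$, the product $P_n(x)\,q(x)$ no longer changes sign on $(a,b)$, so that $\int_a^b P_n(x)\,q(x)\,w(x)\,dx\neq 0$; but $\deg q=k<n$ forces this same integral to vanish by orthogonality, a contradiction. Hence $k=n$, so $P_n$ has $n$ distinct real roots and splits completely over $\mathbb{R}$, yielding $\mathbb{R}\{P_n\}=\mathbb{R}$.

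The step I expect to demand the most care is supplying the orthogonality relation itself for each entry of the table, that is, identifying the correct weight $w$ and interval $(a,b)$. For the hypergeometric-type equation $\rho(x)y''+\tau(x)y'+\lambda y=0$ this follows by rewriting it in self-adjoint Sturm--Liouville form $(\rho\,w\,y')'+\lambda\,w\,y=0$, where $w$ solves the Pearson equation $(\rho w)'=\tau w$; the boundary contributions then vanish at the endpoints and deliver orthogonality of eigenfunctions attached to distinct eigenvalues $\lambda_n$. An alternative that bypasses integrals entirely is to invoke the three-term recurrence $P_{n+1}=(x-\alpha_n)P_n-\beta_n P_{n-1}$ with $\beta_n>0$ and to recognize $P_n$ as the characteristic polynomial of a real symmetric Jacobi matrix, whose eigenvalues are automatically real; either route closes the argument.
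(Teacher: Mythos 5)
Your proposal is correct, and its logical skeleton coincides with the paper's: both reduce the field-theoretic claim to the classical fact that $P_{n}$ has $n$ real, simple zeros $\alpha_{1},\dots,\alpha_{n}$, after which $\mathbb{R}\{P_{n}\}=\mathbb{R}[\alpha_{1},\dots,\alpha_{n}]=\mathbb{R}$ is immediate. The genuine difference lies in how that fact is handled. The paper's proof simply \emph{quotes} the realness and distinctness of the roots as known and spends its few lines on the (essentially trivial) observation that adjoining real numbers to $\mathbb{R}$ gives back $\mathbb{R}$; you instead \emph{prove} the key fact by the standard sign-change count --- orthogonality of $P_{n}$ against all polynomials of degree less than $n$ with respect to a positive weight forces $n$ sign changes in $(a,b)$, since otherwise $q(x)=(x-x_{1})\cdots(x-x_{k})$ with $k<n$ would make $\int_{a}^{b}P_{n}\,q\,w\,dx$ simultaneously nonzero (constant sign of $P_{n}q$, positivity of $w$) and zero (orthogonality), a contradiction --- and you further indicate how to manufacture the orthogonality relation from the hypergeometric equation via the Pearson equation $(\rho w)'=\tau w$ and Sturm--Liouville self-adjointness, or to bypass integration altogether through the three-term recurrence and the real symmetric Jacobi matrix. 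Your argument is sound (it works verbatim on the unbounded Laguerre and Hermite intervals, where all moments exist, and the Jacobi-matrix route even delivers simplicity from the nonzero off-diagonal entries), and it buys self-containedness and generality: it applies to any orthogonal polynomial sequence for a positive weight, not only the tabulated classical families, whereas the paper's version is citation-dependent but shorter. Note also that for the splitting-field conclusion alone, realness of the roots suffices; the simplicity both you and the paper establish is not actually needed for $\mathbb{R}\{P_{n}\}=\mathbb{R}$, though it is used elsewhere in the paper (irreducibility of the invariant curve), so carrying it along does no harm.
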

\begin{proof}
	Due to the roots $\alpha_{1}, \ldots,\alpha_{n}$ of any orthogonal polynomial $P_{n}$ of degree $n$ are real and distinct, then 
	$$\mathbb{R}\{P_{n}\}=\mathbb{R}[\alpha_{1},...,\alpha_{n}].$$
	Taking the integral domain
	$\mathbb{R}[\alpha_{1}]$. By definition we have that $$\mathbb{R}[\alpha_{1}]=\{f(\alpha_{1})/ f(x)\in \mathbb{R}[x]\}.$$
	Thus $f(\alpha_{1})\in \mathbb{R}$. In this way $\mathbb{R}[\alpha_{1},...,\alpha_{n}]=\mathbb{R}$.
\end{proof}

\begin{obs}
	From the previous proposition we can notice that if we take as base field the real members, then the splitting field of any orthogonal polynomial is again the real numbers. That is, the extension $L=\mathbb{R}\{P_{n}\}=\mathbb{R}$ and therefore the Galois group of the polynomial is $G(L\setminus \mathbb{R})=\{f: f(x)=x, \forall x\in \mathbb{R}\}=Id.$
\end{obs}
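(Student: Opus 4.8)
The plan is to reduce the determination of the splitting field to a single structural property of classical orthogonal polynomials: all of their roots are real and simple. Granting this, the splitting field is generated over $\mathbb{R}$ by numbers that already belong to $\mathbb{R}$, so it collapses to $\mathbb{R}$ itself, and the proposition follows.

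First I would establish the reality and simplicity of the roots, which is the only substantive point. Each $P_n$ appearing in the table is orthogonal with respect to a positive weight $w$ on its interval of orthogonality $(a,b)$ (namely $(-1,1)$, $(0,\infty)$ or $(-\infty,\infty)$, according to whether $\rho=1-x^2$, $\rho=x$ or $\rho=1$). Suppose $P_n$ changed sign at only the points $x_1,\dots,x_k\in(a,b)$ with $k<n$, and set $Q(x)=\prod_{i=1}^{k}(x-x_i)$, a polynomial of degree $k<n$. Then $P_nQ$ keeps a constant sign on $(a,b)$, so that $\int_a^b P_n(x)Q(x)w(x)\,dx\neq 0$; but the orthogonality of $P_n$ against every polynomial of degree strictly less than $n$ forces this integral to vanish, a contradiction. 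Hence $k=n$, so $P_n$ has exactly $n$ distinct real zeros, each necessarily simple.

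Once the roots $\alpha_1,\dots,\alpha_n$ are known to lie in $\mathbb{R}$, the remaining steps are immediate field theory. By definition $\mathbb{R}\{P_n\}$ is the smallest field containing $\mathbb{R}$ together with all roots of $P_n$, that is $\mathbb{R}\{P_n\}=\mathbb{R}(\alpha_1,\dots,\alpha_n)$. Since each $\alpha_i\in\mathbb{R}$, adjoining them produces no proper extension and $\mathbb{R}(\alpha_1,\dots,\alpha_n)=\mathbb{R}$, whence $\mathbb{R}\{P_n\}=\mathbb{R}$, as claimed. The subsequent Remark is then automatic: the extension $L=\mathbb{R}\{P_n\}=\mathbb{R}$ is trivial, so its Galois group over $\mathbb{R}$ reduces to the identity.

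The hard part is the reality statement, and the only care it requires is knowing that the weight is genuinely positive on $(a,b)$ (which, for Jacobi and Laguerre, presupposes the usual parameter restrictions) so that the quadrature and sign-change argument applies. An alternative route that sidesteps the weight is to use that each $P_n$ is an eigenfunction of the formally self-adjoint Sturm--Liouville operator associated with $\rho y''+\tau y'$; self-adjointness yields a real spectrum and, through the same orthogonality relation, real simple zeros. Either way, everything past the reality of the roots is routine.
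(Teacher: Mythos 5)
Your argument is correct, and its logical skeleton is the same as the paper's: reduce everything to the fact that the zeros $\alpha_1,\dots,\alpha_n$ of $P_n$ are real and distinct, conclude $\mathbb{R}\{P_n\}=\mathbb{R}(\alpha_1,\dots,\alpha_n)=\mathbb{R}$, and read off that the Galois group of the trivial extension is the identity. The genuine difference is one of completeness: the paper's proposition (on which the remark rests) simply asserts that the roots of any orthogonal polynomial are real and distinct, outsourcing this to the classical theory in \cite{chihara,ismail}, whereas you actually prove it. Your sign-change argument --- if $P_n$ changed sign at only $k<n$ points of $(a,b)$, then multiplying by $Q(x)=\prod_{i=1}^{k}(x-x_i)$ would give a product of constant sign with nonzero weighted integral, contradicting orthogonality of $P_n$ to all polynomials of degree less than $n$ --- is the standard quadrature proof and is carried out correctly, including the observation (which the paper never makes explicit) that it requires a genuinely positive weight, hence the usual parameter restrictions for the Jacobi and Laguerre families. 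What your route buys is self-containedness and a precise identification of where the hypotheses on the weight enter; what the paper's terser route buys is brevity. Your final field-theoretic step, that adjoining real numbers to $\mathbb{R}$ yields no proper extension so $G(L/\mathbb{R})$ reduces to the identity map, is immediate and coincides with the content of the remark.
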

The following proposition appears in \cite[\S 4.2]{almp} and it is included, jointly with the proof, for completeness.
\begin{prop}\label{propporic}
	If we consider two polynomials $\rho(x)$, $\tau(x)$ and the parameter $\lambda_{n}$ from the previous table, then for any $\mu$ the Riccati type differential equation 
	\begin{equation}
	\label{ricati}
	\frac{dv}{dx}=\frac{\lambda_{n}}{\mu}+\frac{\rho'-\tau}{\rho}v+\frac{\mu}{\rho}v^{2},
	\end{equation}
	can be transform into the hypergeometric type equation (\ref{HE}).
	\begin{equation}
	\rho(x) y''+K_{1}P_{1}y'+\lambda_{n}y=0\nonumber
	\end{equation}
\end{prop}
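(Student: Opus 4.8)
The plan is to linearize the Riccati equation (\ref{ricati}) by the classical logarithmic-derivative substitution. Writing (\ref{ricati}) in the generic Riccati form $v' = P + Qv + Rv^{2}$ with $P=\lambda_{n}/\mu$, $Q=(\rho'-\tau)/\rho$ and $R=\mu/\rho$, the standard device is to introduce a new unknown $y$ through
\[
v = -\frac{1}{R}\,\frac{y'}{y} = -\frac{\rho}{\mu}\,\frac{y'}{y},
\]
equivalently $v=-\frac{\rho}{\mu}(\ln y)'$, which is exactly the natural inverse of the change of variables that produced the quadratic system in the Preliminaries. Here the standing hypothesis $\mu\neq 0$ is precisely what makes this substitution admissible.

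Next I would differentiate. Computing $v'$ from the substitution produces, besides the linear terms, a quadratic term $\frac{\rho}{\mu}(y'/y)^{2}$, while the quadratic term $\frac{\mu}{\rho}v^{2}$ of (\ref{ricati}) produces the identical expression $\frac{\mu}{\rho}\cdot\frac{\rho^{2}}{\mu^{2}}(y'/y)^{2}=\frac{\rho}{\mu}(y'/y)^{2}$; the whole point of the normalization factor $-\rho/\mu$ is that these two quadratic contributions sit on opposite sides of the equation with equal sign and therefore cancel. What remains is linear in $y''/y$, $y'/y$ and the constant $\lambda_{n}/\mu$. Clearing denominators by multiplying through by $-\mu y$, the terms involving $\rho'\,y'$ then cancel as well, and one is left with
\[
\rho(x)\,y'' + \tau(x)\,y' + \lambda_{n}\,y = 0,
\]
which is the hypergeometric-type equation (\ref{HE}), the coefficient of $y'$ being the polynomial $\tau$ denoted $K_{1}P_{1}$ in the statement.

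The computation itself is entirely routine, so I do not expect a genuine obstacle; the only substantive decision is the choice of substitution, and in particular its correct sign and normalization $-\rho/\mu$. It is worth checking explicitly that the opposite sign fails: taking $v=+\frac{\rho}{\mu}(y'/y)$ makes the two quadratic terms add rather than cancel, leaving a spurious $-\frac{2\rho}{\mu}(y'/y)^{2}$ and no second-order linear equation at all. The secondary point requiring care is the bookkeeping of the $\rho'$ terms: they must cancel in order to recover $\tau$ (and not $\tau-\rho'$ or the like) as the first-order coefficient, and tracking them carefully is what pins down that the target is exactly (\ref{HE}) rather than a nearby equation.
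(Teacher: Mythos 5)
Your proposal is correct and is essentially the paper's own argument: the paper performs the linearization in two steps, first setting $w=\mu v$ and then $w=-\rho\,\frac{y'}{y}$, which composes to exactly your substitution $v=-\frac{\rho}{\mu}\,\frac{y'}{y}$, and the cancellations of the quadratic and $\rho'$ terms proceed just as you describe. The only difference is cosmetic (one composed substitution versus two successive ones), so nothing further is needed.
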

\begin{proof}
	Making the change of variable $w=\mu v$ we obtain
	$$\begin{array}{rl}
	\frac{dw}{dx}&=\mu\frac{dv}{dx}\\
	&\\
	&=\lambda_{n}+\frac{\rho'-\tau}{\rho}\mu v+\frac{1}{\rho}\mu^{2}v^{2}\\
	&\\
	&=\lambda_{n}+\frac{\rho'-\tau}{\rho}w+\frac{1}{\rho}w^{2},
	\end{array}$$
	obtaining the differential equation
	$$\frac{dw}{dx}=\lambda_{n}+\frac{\rho'-\tau}{\rho}w+\frac{1}{\rho}w^{2}$$
	Now if we take $w=-\rho\frac{y'}{y}$, then
	\begin{align}\label{2trans}
	y\frac{dw}{dx}+w\frac{dy}{dx}=-\rho' y'-\rho y''.
	\end{align}
	On the other hand,
	$$\begin{array}{rl}
	y\frac{dw}{dx}+w\frac{dy}{dx}&=y\left[\lambda_{n}-\frac{\rho'-\tau}{\rho}\left(\rho\frac{y'}{y}\right)+\frac{1}{\rho}\left(\rho\frac{y'}{y}\right)^{2}+\right]+\left(\rho\frac{y'}{y}\right)y'\\
	&\\
	&=y\lambda_{n}-\rho'y'+\tau y'.
	\end{array}$$
	This is,
	\begin{align}\label{3trans}
	y\frac{dw}{dx}+w\frac{dy}{dx}=y\lambda_{n}-\rho'y'+\tau y'.
	\end{align}
	
	Now by (\ref{2trans}) and (\ref{3trans}), we have
	$$\begin{array}{rl}y\lambda_{n}-\rho'y'+\tau y'&=-\rho' y'-\rho y'',\\
	&\\
	\rho y''+\tau y'+\lambda_{n}y &=0.
	\end{array}$$
\end{proof}

In this way we can associate a polynomial system in the plane to each family of classical orthogonal polynomials as follows:

\begin{center}
	\begin{tabular}{| c | c | c |  }
		\hline
		Family                       & $\dot{v}$           & $\dot{x}$                                   \\ \hline
		$P_{n}^{(\alpha,\beta)}$      & $\frac{\lambda_{n}}{\mu}(1-x^{2})+(\alpha-\beta+(\alpha+\beta)x)v+\mu v^{2}$   & $1-x^{2}$         \\ \hline
		$P_{n}$                       & $\frac{\lambda_{n}}{\mu}(1-x^{2})+\mu v^{2}$ &  $1-x^{2}$                                           \\ \hline
		$T_{n}$                       & $\frac{\lambda_{n}}{\mu}(1-x^{2})-xv+\mu v^{2}$  & $1-x^{2}$                                               \\ \hline
		$U_{n}$                       & $\frac{\lambda_{n}}{\mu}(1-x^{2})+xv+\mu v^{2}$ &$1-x^{2}$                                             \\ \hline
		$C_{n}^{(\alpha)}$            & $\frac{\lambda_{n}}{\mu}(1-x^{2})+(2\alpha-1)xv+\mu v^{2}$  &  $1-x^{2}$                            \\ \hline
		$L_{n}^{(\alpha)}$            &   $\frac{\lambda_{n}}{\mu}x+(-\alpha+x)v+\mu v^{2}$  &  $x $       \\ \hline
		$L_{n}$                       &   $\frac{\lambda_{n}}{\mu}x+xv+\mu v^{2}$   &  $x $                                                  \\ \hline
		$H_{n}$                       &  $\frac{\lambda_{n}}{\mu}+2xv+\mu v^{2}$   &  1                                                \\
		\hline
	\end{tabular}
\end{center}

The following theorem appears in \cite[\S 4.2]{almp} and it is included, jointly with the proof, for completeness.
\begin{teor}
	Let $\rho(x)$, $\tau(x)$ and $\lambda_{n}$ as in the previous proposition. For any $\mu\neq 0$, The quadratic polynomial vector field corresponding to the system
	\begin{equation}
	\label{sispolino}
	\begin{array}{rl}
	\frac{dv}{dt}&=\frac{\lambda_{n}}{\mu}\rho+(\rho'-K_{1}P_{1})v+\mu v^{2},\\
	&\\
	\frac{dx}{dt}&=\rho
	\end{array}
	\end{equation}
	has an invariant algebraic curve of the form $\mu vP_{n}(x)+\rho(x)P'_{n}(x)=0$, where $P_{n}$ is any classical orthogonal polynomial associated to $\rho(x)$, $\tau(x)$ and $\lambda_{n}$
\end{teor}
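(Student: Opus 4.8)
The plan is to verify directly that the proposed curve is invariant, that is, to exhibit a polynomial cofactor $K(v,x)$ for which the Lie derivative of $f(v,x)=\mu v P_n(x)+\rho(x)P_n'(x)$ along the field vanishes on $\{f=0\}$. Writing $X$ for the vector field in \eqref{sispolino} and abbreviating $\tau:=K_1P_1$ (the identification forced by Proposition \ref{propporic}), a curve $f=0$ is invariant precisely when
$$X(f)=\frac{\partial f}{\partial v}\,\dot v+\frac{\partial f}{\partial x}\,\dot x=K\,f$$
for some polynomial $K$. Thus the whole proof reduces to computing $X(f)$ and recognizing it as a polynomial multiple of $f$.

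First I would record the partial derivatives, using the shorthand $P:=P_n(x)$,
$$\frac{\partial f}{\partial v}=\mu P,\qquad \frac{\partial f}{\partial x}=\mu v P'+\rho' P'+\rho P''.$$
Substituting $\dot v=\tfrac{\lambda_n}{\mu}\rho+(\rho'-\tau)v+\mu v^2$ and $\dot x=\rho$ and expanding gives
$$X(f)=\lambda_n\rho P+\mu(\rho'-\tau)Pv+\mu^2 P v^2+\mu\rho P'v+\rho\rho'P'+\rho^2P''.$$
The crucial step is to eliminate the second-derivative term by invoking the hypergeometric equation \eqref{HE} satisfied by $P_n$, namely $\rho P''+\tau P'+\lambda_n P=0$, which after multiplication by $\rho$ turns $\rho^2 P''$ into $-\rho\tau P'-\lambda_n\rho P$. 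This substitution is the heart of the argument: it is exactly the orthogonal-polynomial/ODE input, and it forces the two $\lambda_n\rho P$ contributions to cancel.

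After this cancellation the surviving expression is
$$X(f)=\mu^2 P v^2+\mu\bigl[(\rho'-\tau)P+\rho P'\bigr]v+(\rho'-\tau)\rho P',$$
and I would then match coefficients in $v^2$, $v$, and $v^0$ to check that it factors as $\bigl(\mu v+\rho'-\tau\bigr)\bigl(\mu vP+\rho P'\bigr)$. Hence $f=0$ is an invariant algebraic curve with cofactor $K=\mu v+\rho'-\tau=\mu v+\rho'-K_1P_1$, which completes the proof. I expect no genuine conceptual obstacle; the only delicate point is the bookkeeping in the expansion and applying the single ODE substitution so that the $\lambda_n$ terms disappear, after which the factorization is essentially forced.
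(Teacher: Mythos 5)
Your proposal is correct and follows essentially the same route as the paper's proof: compute $X(f)$ directly, use the hypergeometric equation $\rho P_n''+\tau P_n'+\lambda_n P_n=0$ to cancel the $\lambda_n\rho P_n$ terms, and factor out the cofactor $K=\mu v+\rho'-K_1P_1$. The only addition in the paper is a brief verification that $f$ is irreducible (since $P_n$ has simple roots and shares no roots with $\rho$), a point the statement as worded does not strictly require.
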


\begin{proof}
	The differential equation associated with the polynomial system (\ref{sispolino}) is:
	$$\frac{dv}{dx}=\frac{\lambda_{n}}{\mu}+\frac{\rho'-\tau}{\rho}v+\frac{\mu}{\rho}v^{2}$$
	which, by Proposition \ref{propporic} can be transformed in the hypergeometric equation (\ref{HE}) and for each $n\in\mathbb{Z}_+$, we have the solution $y_{n}=P_{n}$ , which is some classical orthogonal polynomial associated with functions $\rho(x)$, $\tau(x)$ and the parameter $\lambda_{n}$.
	\begin{align}\label{sol}
	\rho(x) P''_{n}+\tau P'_{n}+\lambda_{n}P_{n}=0
	\end{align}
	
	Let $X$ be the vector field associated with the differential system (\ref{sispolino})
	Now, for $n$ fixed, we consider the polynomial  $f(v,x)=\mu vP_{n}(x)+\rho P'_{n}(x)$ and we show that it is irreducible and satisfies $Xf=Kf$, where $K$ is the cofactor of the invariant curve $f=0$.\\
	
We know that both $P_{n}(x)$ and $P'_{n}(x)$ do not have common factors because the roots of the orthogonal polynomials are simple. In addition, for $\rho(x)$ defined for each family of classical orthogonal polynomials, we have that both $\rho(x)$ and $P_{n}(x)$ do not share roots because the roots of orthogonal polynomials remain within the range $(a,b)$. In fact:
	\begin{itemize}
		\item[$\clubsuit$] In the Jacobi polynomial, $\rho(x)=1-x^{2}$ whose roots are not in the interval $(-1,1)$
		\item[$\clubsuit$] In the Laguerre polynomials, $\rho(x)=x$ whose root is not in the interval $(0,\infty)$
		\item[$\clubsuit$]  In the Hermite polynomials, $\rho(x)=1$.
	\end{itemize}
	Hence, the polynomial $f(v,x)=\mu vP_{n}(x)+\rho P'_{n}(x)$ is irreducible.\\
	
	On the other hand, using the differential field associated with the differential system and (\ref{sol}), we have that:
	
	$$\begin{array}{rl}
	Xf&=\left(\frac{\lambda_{n}}{\mu}\rho+(\rho'-K_{1}P_{1})v+\mu v^{2}\right)\frac{\partial f}{\partial v}+\rho\frac{\partial f}{\partial x}\\
	&\\
	&=\left(\frac{\lambda_{n}}{\mu}\rho+(\rho'-K_{1}P_{1})v+\mu v^{2}\right)\mu P_{n}+\rho\left(\mu v P'_{n}+\rho'P'_{n}+ \rho^{2}P''_{n}\right)\\
	&\\
	&=\rho'(\mu v P_{n}+\rho P'_{n}) +\mu v(\mu v P_{n}+\rho P'_{n})-\mu K_{1}P_{1}vP_{n}+\rho (\rho P''_{n}+\lambda_{n}P_{n})\\
	&\\
	&=\rho'(\mu v P_{n}+\rho P'_{n}) +\mu v(\mu v P_{n}+\rho P'_{n})-\mu K_{1}P_{1}vP_{n}- K_{1}P_{1}\rho P'_{n}\\
	&\\
	&=\rho'(\mu v P_{n}+\rho P'_{n}) +\mu v(\mu v P_{n}+\rho P'_{n})- K_{1}P_{1}(\mu vP_{n}+\rho P'_{n})\\
	&\\
	&=(\rho'+\mu v- K_{1}P_{1})(\mu vP_{n}+\rho P'_{n})\\
	&\\
	Xf&=(\rho'+\mu v- K_{1}P_{1})f
	\end{array}$$
	
	The above implies that $\mu vP_{n}(x)+\rho P'_{n}(x)=0$ it is an invariant curve for the system (\ref{sispolino})
\end{proof}
The following proposition is entirely a contribution of this paper.
\begin{prop}\label{propid}
The quadratic polynomial system
	\begin{equation}\label{sis darboux}\left\{
	\begin{array}{rl}
	\dot{v}= & \dfrac{\lambda_{n}}{\mu}(1-x^{2})+avx+bv+\mu v^{2}=P(v,x) \\
	\dot{x}= & 1-x^{2} \\
	\end{array}
	\right.
	\end{equation}
	has an invariant of Darboux in the form
	$$I(v,x,t)=\frac{\sqrt{x-1}}{\sqrt{x+1}}e^{t}$$
\end{prop}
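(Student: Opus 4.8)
The plan is to read off the invariant algebraic curves of the system directly from the second equation and then invoke the Darboux construction of Theorem~\ref{prop de darboux}. The decisive observation is that $\dot{x}=1-x^{2}=-(x-1)(x+1)$ factors and does not involve $v$, so the two lines $f_{1}:=x-1=0$ and $f_{2}:=x+1=0$ are the natural candidates for invariant algebraic curves, irrespective of the coefficients $a$, $b$, $\lambda_{n}/\mu$ appearing in $P(v,x)$. First I would confirm that they are genuinely invariant by computing their cofactors, and then search for exponents that produce the factor $e^{t}$ with the correct constant $s$.

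For $f_{1}=x-1$ one has $\partial f_{1}/\partial v=0$ and $\partial f_{1}/\partial x=1$, hence $Xf_{1}=P\cdot 0+(1-x^{2})\cdot 1=-(x-1)(x+1)=\bigl(-(x+1)\bigr)f_{1}$, so $f_{1}$ is invariant with cofactor $K_{1}=-(x+1)$. The identical computation for $f_{2}=x+1$ gives $Xf_{2}=-(x-1)(x+1)=\bigl(-(x-1)\bigr)f_{2}$, so $K_{2}=-(x-1)$. Being linear, both $f_1$ and $f_2$ are irreducible. Note that the first equation $\dot v=P(v,x)$ contributes nothing here, since each cofactor is forced by $\dot x$ alone; this is why the statement holds for every admissible $a$, $b$, $\mu$.

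It then remains to find constants $\lambda_{1},\lambda_{2}$ and $s\neq 0$ with $\lambda_{1}K_{1}+\lambda_{2}K_{2}=-s$. Taking $\lambda_{1}=\tfrac{1}{2}$ and $\lambda_{2}=-\tfrac{1}{2}$ gives $\tfrac{1}{2}\bigl(-(x+1)\bigr)-\tfrac{1}{2}\bigl(-(x-1)\bigr)=-1$, a nonzero constant, so $s=1$ works and Theorem~\ref{prop de darboux} delivers the Darboux invariant $f_{1}^{1/2}f_{2}^{-1/2}e^{st}=\dfrac{\sqrt{x-1}}{\sqrt{x+1}}\,e^{t}=I(v,x,t)$. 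I expect no serious obstacle: the entire argument is bookkeeping of cofactor signs, and as an independent check one can simply differentiate along the flow, $\tfrac{d}{dt}\ln I=\tfrac{\dot x}{2}\bigl(\tfrac{1}{x-1}-\tfrac{1}{x+1}\bigr)+1=\tfrac{\dot x}{x^{2}-1}+1=0$, since $\dot x=-(x^{2}-1)$. The only point worth flagging is that neither exponential factors nor $v$-dependent curves are needed, so the quadratic nonlinearity of $P$ is irrelevant to the existence of this particular invariant.
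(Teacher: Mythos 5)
Your proposal is correct and takes essentially the same approach as the paper: the same two invariant lines $x-1=0$ and $x+1=0$ (your labels $f_{1},f_{2}$ are merely swapped relative to the paper's, whence the exponents $\tfrac{1}{2},-\tfrac{1}{2}$ instead of $-\tfrac{1}{2},\tfrac{1}{2}$), the same cofactor computations, and the same application of Theorem~\ref{prop de darboux} with $s=1$ yielding $I(v,x,t)=\frac{\sqrt{x-1}}{\sqrt{x+1}}e^{t}$. Your closing direct verification that $\frac{d}{dt}\ln I=0$ along the flow is a small extra check beyond what the paper records, but it does not change the route.
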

\begin{proof}
	The algebraic curves
	$$\begin{array}{lcr}
	f_{1}(v,x)=x+1=0, &  & f_{2}(v,x)=x-1=0
	\end{array}
	$$
	are invariant algebraic curves of the system (\ref{sis darboux}) with cofactors
	$$\begin{array}{lcr}
	K_{1}(v,x)=1-x, &  & k_{2}(v,x)=-1-x,
	\end{array}
	$$
	respectively.\\
	
	In fact, since for this system, the vector field is defined as:
	$$X=P(v,x)\frac{\partial }{\partial v}+(1-x^{2})\frac{\partial }{\partial x}$$
	we obtain that,
	$$\begin{array}{lcr}
	X(f_{1})=(1-x)f_{1} & and & X(f_{2})=(-1-x)f_{2}
	\end{array}
	$$
	
	%%%%%%%%%%%%%%%%%%%%%%%%%%%%%%%%%%%%%%%%%%%%%%%%%%%%%%%%%%%%%%%%%%%%%
	Now using the theorem  \ref{prop de darboux}, taking $s=1$,
	%%%%%%%%%%%%%%%%%%%%%%%%%%%%%%%%%%%%%%%%%%%%%%%%%%%%%%%%%%%%%%%%%%%%

	$$\lambda_{1}K_{1}+\lambda_{1}K_{1}=-1$$
	we obtain 
	$$\begin{array}{lcr}
	\lambda_{1}=-1/2, &  & \lambda_{2}=1/2.
	\end{array}
	$$
	Thus, we obtain the Darboux invariant
	$$I(v,x,t)=\frac{\sqrt{x-1}}{\sqrt{x+1}}e^{t}.$$
\end{proof}

Now we will study the phase portraits on the Poincar\'e disk of the polynomial systems associated with the classical orthogonal polynomials, which is one of the main contributions of this paper.

\begin{prop}
	The phase portrait on the Poincar\'e disk of any quadratic polynomial system
	\begin{equation}\label{retratos1}
	\left\{
	\begin{array}{rl}
	\dot{v}= &\frac{\lambda_{n}}{\mu}(1-x^{2})+avx+\mu v^{2}  \\
	&\\
	\dot{x}= & 1-x^{2} \\
	\end{array}
	\right.
	\end{equation}
	with $\mu\neq0$, $\lambda_{n}>0$ and $a\in \mathbb{R}$
	is topologically equivalent to some of the phase portraits described in the Figure $1$.
\end{prop}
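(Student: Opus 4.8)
The plan is to combine the local classification of the singular points, both finite and at infinity, furnished by Theorem \ref{teo hiper} and the subsequent semi-hyperbolic and nilpotent theorems, with the global information carried by the invariant lines and the Darboux invariant of Proposition \ref{propid}. A useful preliminary observation is that the map $(v,x,t)\mapsto(-v,-x,-t)$ leaves \eqref{retratos1} invariant; consequently the phase portrait is symmetric under the central involution $(v,x)\mapsto(-v,-x)$ with the orientation of the orbits reversed. This symmetry interchanges the lines $x=1$ and $x=-1$ and thereby halves the case analysis.

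First I would locate the finite singular points. Since $\dot x=1-x^{2}$ vanishes only on $x=\pm1$, every finite singular point lies on these two invariant lines, and setting $\dot v=0$ there yields, for $a\neq0$, the four points $(0,1)$, $(-a/\mu,1)$, $(0,-1)$, $(a/\mu,-1)$. The Jacobian of \eqref{retratos1} is triangular, so its eigenvalues are the diagonal entries $ax+2\mu v$ and $-2x$, which are always real; in particular there are no finite foci or centers. Evaluating at each point and invoking Theorem \ref{teo hiper}, one obtains a saddle and a node on each invariant line, the stabilities on $x=-1$ being opposite to those on $x=1$ in agreement with the symmetry. The degenerate value $a=0$, where the saddle and the node on each line collide into a single point carrying a zero eigenvalue, must be handled with the semi-hyperbolic classification theorem; reducing to the center manifold one expects a saddle-node in each case.

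Next I would compactify and analyse the points at infinity. By Theorem \ref{icp1} these are the real roots of the cubic $XQ_{2}-YP_{2}=0$ on the equator; factoring it exhibits the direction of the $v$-axis together with two further directions governed by a quadratic whose discriminant $(1+a)^{2}+4\lambda_{n}$ is positive because $\lambda_{n}>0$, giving exactly three pairs of antipodal infinite singular points. Passing to the chart of system \eqref{S1}, which covers all three since $(0,\pm1,0)$ is readily checked not to be singular, the linearizations are again triangular with real eigenvalues, so each infinite point is a node or a saddle: the endpoints of the $v$-axis carry a double eigenvalue and are nodes, while the two remaining directions yield one node and one saddle, the signs being fixed by the flow on the equator as prescribed in Theorem \ref{icp1}.

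Finally I would assemble the global portrait. The invariant lines $x=\pm1$ and the Poincar\'e circle partition the disk into invariant regions, and the Darboux invariant $I=\sqrt{x-1}/\sqrt{x+1}\,e^{t}$ of Proposition \ref{propid}, valid here because its cofactors come only from $\dot x=1-x^{2}$, is constant along orbits while depending genuinely on $t$; this precludes periodic orbits and limit cycles off the invariant lines, so the portrait is determined completely by the singular points and their separatrix connections. Using the symmetry to pair up separatrices and the invariant lines to channel them, I would enumerate the finitely many cases according to the sign of $a$, and of $\mu$, which merely reverses time, and verify that each configuration is topologically equivalent to one of the pictures in Figure~$1$. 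The step I expect to be the main obstacle is precisely this global gluing: matching correctly the separatrices that enter and leave the infinite saddle and nodes, and disposing of the degenerate $a=0$ saddle-node, rather than the routine local computations.
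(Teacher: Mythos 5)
Your proposal is correct and follows essentially the same route as the paper: the same four finite singular points with triangular Jacobians giving a saddle and a node on each line $x=\pm1$ for $a\neq0$, the same reduction via the semi-hyperbolic theorem showing $(0,\pm1)$ become saddle-nodes when $a=0$ (the paper computes $g(\tilde{v})=\mu\tilde{v}^{2}+o(\tilde{v}^{2})$, so $m=2$ even), and the same Poincar\'e compactification producing three pairs of infinite singular points, namely an unstable node and a saddle coming from the quadratic with discriminant $(a+1)^{2}+4\lambda_{n}>0$, and star nodes with double eigenvalue $-\mu$ at the ends of the $v$-axis, whose stability depends on the sign of $\mu$. Your extra global ingredients --- the symmetry $(v,x,t)\mapsto(-v,-x,-t)$ and the use of the Darboux invariant of Proposition \ref{propid} (equivalently, monotonicity of $x$ off the invariant lines $x=\pm1$, since $\dot{x}=1-x^{2}$ is independent of $v$) to exclude periodic orbits and pin down the separatrix gluing --- go beyond the paper's proof, which stops after the local classification and leaves the assembly into Figure 1 implicit, so these additions correctly supply rigor at exactly the step the paper omits.
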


\begin{figure}[h]\label{fig sis 1}
	% Requires \usepackage{graphicx}
	\begin{center}
		\includegraphics[width=8cm]{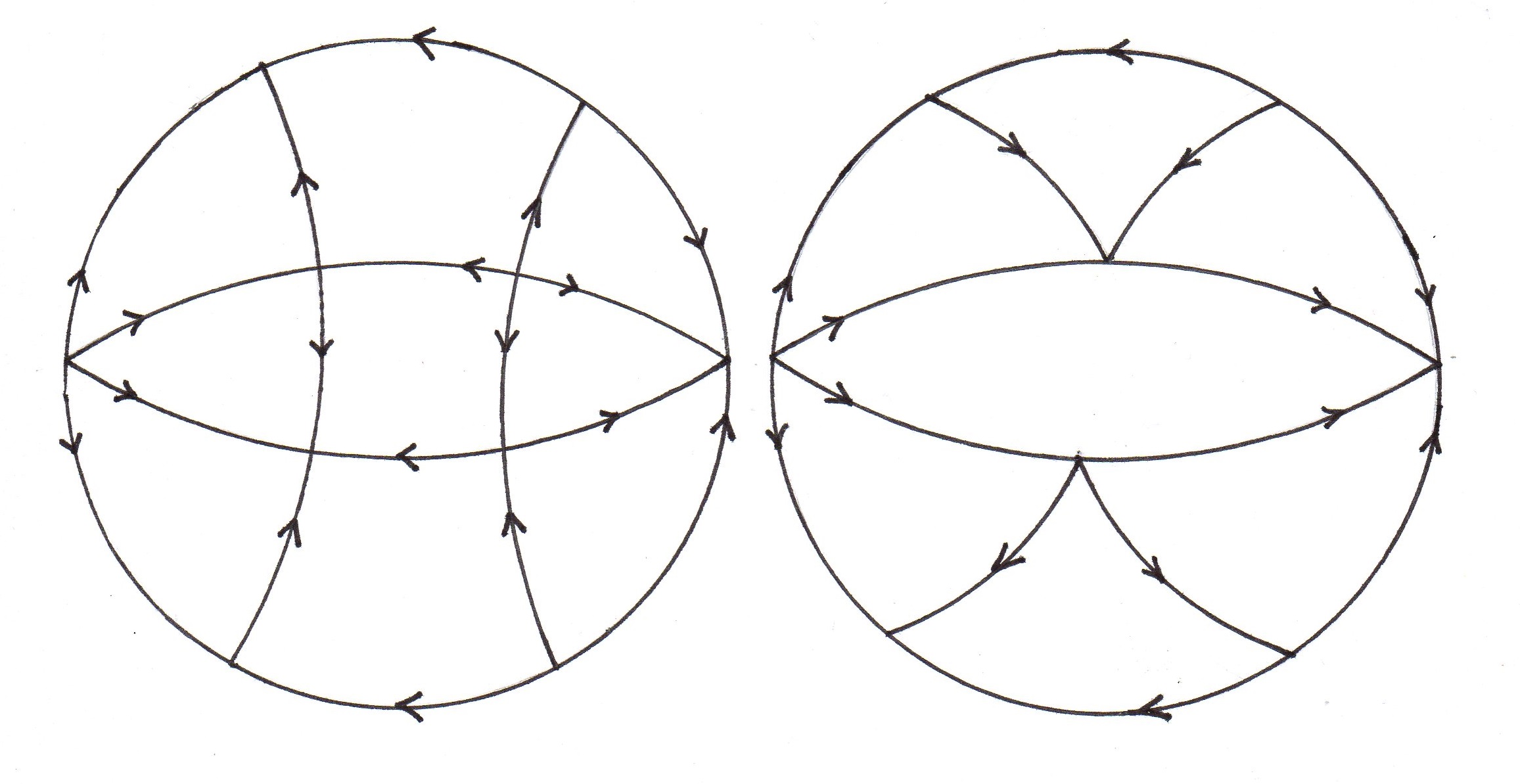}\\
		\caption{Phase portraits for the system \ref{sis retratos1}}\label{fig ejem}
	\end{center}
\end{figure}

\begin{proof}
	In the finite plane, the singular points of the system are
	$$\begin{array}{cccc}
	(0,1), & (0,-1), & (-a/\mu,1), & (a/\mu,-1).\\
	\end{array}
	$$
	Two cases are possibles: If $a\neq 0$ there are four singular points and, if $a=0$ there are only two singular points.\\

	\textbf{Case 1:} $a\neq0$\\
	In the finite plane there are four singular points.
	$$\begin{array}{rl}
	DX(v,x)= & \left[
	\begin{array}{cc}
	ax+2\mu v & -2\frac{\lambda_{n}}{\mu}x+av \\
	&\\
	0 & -2x \\
	\end{array}
	\right]
	\end{array}
	$$
	
	By evaluating this matrix in each of the singular points, we obtain
	$$\begin{array}{cc}
	\begin{array}{rl}
	DX(0,1)=&\left[
	\begin{array}{cc}
	a & -2\lambda_{n}/\mu \\
	0  & -2 \\
	\end{array}
	\right]
	\end{array} & \begin{array}{rl}
	DX(0,-1)=&\left[
	\begin{array}{cc}
	-a & 2\lambda_{n}/\mu\\
	0  & 2 \\
	\end{array}
	\right]
	\end{array} \\
	& \\
	\begin{array}{rl}
	DX(-a/\mu,1)=&\left[
	\begin{array}{cc}
	-a & -(2\lambda_{n}+a^{2})/\mu \\
	0  & -2 \\
	\end{array}
	\right]
	\end{array} & \begin{array}{rl}
	DX(a/\mu,-1)=&\left[
	\begin{array}{cc}
	a & (2\lambda_{n}-a^{2})/\mu \\
	0  & 2 \\
	\end{array}
	\right]
	\end{array}
	\end{array}
	$$
	Therefore, in the finite plane there are two saddle points and two nodes; one stable and the other unstable.\\

	\textbf{Case 2:} $a=0$\\
	In the finite plane there are only two singular points. The Jacobian matrix of the system (\ref{retratos1}), with $a=0$ is:
	$$\begin{array}{rl}
	DX(v,x)= & \left[
	\begin{array}{cc}
	2\mu v & -2\frac{\lambda_{n}}{\mu}x \\
	&\\
	0 & -2x \\
	\end{array}
	\right]
	\end{array}
	$$
	
	$$\begin{array}{cc}
	\begin{array}{rl}
	DX(0,1)= & \left[
	\begin{array}{cc}
	0 & -2\lambda^{n}/\mu \\
	0 & -2 \\
	\end{array}
	\right]
	\end{array}
	& \begin{array}{rl}
	DX(0,-1)= & \left[
	\begin{array}{cc}
	0 & 2\lambda^{n}/\mu \\
	0 & 2 \\
	\end{array}
	\right]
	\end{array}
	\end{array}
	$$

	%%%%%%%%%%%%%%%%%%%%%%%%%%%%%%%%%%%%%%%%%%%%%%%%%%%%%%%%%%%%%%%%%%%%%%%%%%%%%%%%%%%%%%%%%%%%%%%%%%%%%%%%%%%%%%%%%%%%%%%%%%%%%%%%%%%%
	This is, the singular points $(0,1)$ and $(0,-1)$ are semi-hyperbolics points.\\ 
	
	Using  the theorem  \ref{semi hiper} to be able to analyze the behavior of previous singular points, in a neighborhood of the origin. We must translate these points to the origin of the coordinated plane and after transforming the system to rewrite it in a normal way (the normal forms theorem).\\
	
	%%%%%%%%%%%%%%%%%%%%%%%%%%%%%%%%%%%%%%%%%%%%%%%%%%%%%%%%%%%%%%%%%%%%%%%%%%%%%%%%%%%%%%%%%%%%%%%%%%%%%%%%%%%%%%%%%%%%%%%%%%%%%%%%%%%%%%%
We perform the following translation; the result will be a system topologically equivalent to (\ref{retratos1}).
	$$\begin{array}{ccc}
	\tilde{x}=x-1, & \ \ & v=v,
	\end{array}
	$$
	
	$$\left\{
	\begin{array}{rl}
	\dot{v}=& \frac{\lambda_{n}}{\mu}(-2\tilde{x}-\tilde{x}^{2})+\mu v^{2}   \\
	\dot{\tilde{x}}=& -2\tilde{x}-\tilde{x}^{2}, \\
	\end{array}
	\right.
	$$
	then,
	$$\begin{array}{ccc}
	\tilde{x}=\tilde{x}, & \ \ & \tilde{v}= v-\dfrac{\lambda_{n}}{\mu}\tilde{x},
	\end{array}
	$$

	$$\left\{
	\begin{array}{rl}
	\dot{\tilde{v}}=& \frac{\lambda_{n}^{2}}{\mu}\tilde{x}^{2}+2 \lambda \tilde{v}\tilde{x}+\mu \tilde{v}^{2} \\
	&\\
	\dot{\tilde{x}}=& -2\tilde{x}-\tilde{x}^{2}.\\
	\end{array}
	\right.
	$$
	
	This last system is topologically equivalent to the system (\ref{retratos1})  and also meets the hypothesis of the theorem for semi-hyperbolic points. \\ If we take
	$$A(\tilde{v},\tilde{x})=\frac{\lambda_{n}^{2}}{\mu}\tilde{x}^{2}+2 \lambda \tilde{v}\tilde{x}+\mu \tilde{v}^{2}$$
	and
	$$B(\tilde{v},\tilde{x})=-\tilde{x}^{2},$$
	
	then $$\tilde{x}=f(\tilde{v})=-\frac{1}{2}\tilde{v}^{2} +o(\tilde{v}^{2})$$
	is the solution of $$-2\tilde{x}+B(\tilde{v},\tilde{x})=0$$
	near of origin.\\
	
	Now,
	$$g(\tilde{v})=A(\tilde{v},f(\tilde{v}))=\mu \tilde{v}^{2}+o(\tilde{v}^{2})$$
	
	because the lowest-order term of the function $g(\tilde{v})$ is even, the singular point $(0,1)$ is a saddle-node point.\\
	
	Now, for the semi-hyperbolic point $(0,-1)$ we make the transformations
	$$\begin{array}{cc}
	v= v,   & \tilde{x}= x+1 \\
	&  \\
	\tilde{x}= \tilde{x} & \tilde{v}=v-\frac{\lambda_{n}}{\mu}\tilde{x}
	\end{array}
	$$
	obtaining that $(0,-1)$ is a saddle-node point.\\

	Now we will analyze the singular points in infinity, using the transformations on the Poincar\'e sphere, see \cite{PK} .\\
	The flow defined by the study system \ref{retratos1}, on the equator of the Poincar\'e sphere except $(\pm 1,0,0)$ is topologically equivalent to the flow defined by the system
	
	$$\left\{
	\begin{array}{rl}
	\dot{v}=&-\lambda_{n}/\mu+(a+1)v+\mu v^{2}+\lambda_{n}/\mu z^{2}-vz^{2}  \\
	
	\dot{x}= & -z^{3}+z \\
	\end{array}
	\right.
	$$
	
whose singular points to study are:
	
	$$\begin{array}{ccc}
	( v_{1},0)= \left(\frac{-(a+1)+\sqrt{(a+1)^{2}+4\lambda_{n}}}{2\mu},0\right) &  & (v_{2},0)=\left(\frac{-(a+1)-\sqrt{(a+1)^{2}+4\lambda_{n}}}{2\mu},0\right)
	\end{array}
	$$
	
	$$\begin{array}{rl}
	DX(v,z)= & \left[
	\begin{array}{cc}
	(a+1)+2\mu -z^{2}- v & 2\frac{\lambda_{n}}{\mu}z -2vz \\
	&\\
	0 & -3z^{2}+1 \\
	\end{array}
	\right]
	\end{array}$$
	then,
	$$\begin{array}{cc}
	\begin{array}{rl}
	DX(v_{1},0)= & \left[
	\begin{array}{cc}
	\sqrt{(a+1)^{2}+4\lambda_{n}} & 0 \\
	0 & 1 \\
	\end{array}
	\right]
	\end{array}
	& \begin{array}{rl}
	DX(v_{2},0)= & \left[
	\begin{array}{cc}
	-\sqrt{(a+1)^{2}+4\lambda_{n}} & 0 \\
	0 & 1 \\
	\end{array}
	\right]
	\end{array}
	\end{array}
	$$
which indicates that, $(v_{1},0)$ is an unstable node and $(v_{2},0)$ is a saddle point.\\
	The flow defined by the study system on, the equator of the Poincar\'e sphere except $(0,\pm 1,0) $ is topologically equivalent to the flow defined by the system
	$$\left\{\begin{array}{rl}
	\dot{x}=& -\frac{\lambda_{n}}{\mu}xz^{2}+\frac{\lambda_{n}}{\mu}x^{3}-\mu x+z^{2}-(a+1)x^{2}\\
	&\\
	\dot{z}=& -\frac{\lambda_{n}}{\mu}z^{3}+\frac{\lambda_{n}}{\mu}x^{2}z-axz-\mu z
	\end{array}
	\right.$$
	In which it is only necessary to study the behavior of the singular point, the origin.
	
	$$\begin{array}{rl}
	DX(x,z)= & \left[
	\begin{array}{cc}
	-\frac{\lambda_{n}}{\mu}z^{2}+3\frac{\lambda_{n}}{\mu}x^{2}-\mu-2(a+1)x & -2\frac{\lambda_{n}}{\mu}xz+2z \\
	&\\
	2\frac{\lambda_{n}}{\mu}xz-az & -3\frac{\lambda_{n}}{\mu}z^{2}+\frac{\lambda_{n}}{\mu}x^{2}-ax-\mu \\
	\end{array}
	\right]
	\end{array}$$
	Evaluating this matrix in $(0,0)$
	$$\begin{array}{rl}
	DX(0,0)= & \left[
	\begin{array}{cc}
	-\mu & 0 \\
	0 & -\mu \\
	\end{array}
	\right]
	\end{array}
	$$
	This is, the origin of this last system is a node and its stability depends on the sign of $\mu$.\\
	
\end{proof}

\begin{obs}
	For specific values of the parameter $a$, phase portraits are obtained for the polynomial systems associated with the following orthogonal polynomials:
	$$\begin{array}{lcl}
	a=0, & \ \ & P_{n} \\
	a=-1, & \ \ & T_{n} \\
	a=1 & \ \ &  U_{n}\\
	a=2\alpha-1 & \ \ &C_{n}^{(\alpha)}
	\end{array}$$
\end{obs}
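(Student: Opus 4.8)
The plan is essentially one of direct specialization. The parameter $a$ enters the system \eqref{retratos1} only through the single linear term $avx$ appearing in $\dot v$; the remaining ingredients $\frac{\lambda_{n}}{\mu}(1-x^{2})$, $\mu v^{2}$ and the second equation $\dot x = 1-x^{2}$ are independent of $a$. Consequently, to identify \eqref{retratos1} with a prescribed family it suffices to match the coefficient $ax$ of $v$ against the coefficient of $v$ recorded for that family in the table of associated polynomial systems that follows Proposition \ref{propporic}.

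First I would recall how those coefficients are produced. By the construction preceding Proposition \ref{propporic}, each family's system is obtained from its data $\rho(x),\tau(x),\lambda_{n}$, the coefficient of $v$ in $\dot v$ being precisely $\rho'(x)-\tau(x)$. For every one of the families $P_{n}$, $T_{n}$, $U_{n}$, $C_{n}^{(\alpha)}$ one has $\rho(x)=1-x^{2}$, hence $\rho'(x)=-2x$, so that coefficient equals $-2x-\tau(x)$. Reading $\tau$ off the first table of Section~1, a one-line computation yields $-2x-(-2x)=0$ for the Legendre weight $\tau=-2x$, next $-2x-(-x)=-x$ for $\tau=-x$, then $-2x-(-3x)=x$ for $\tau=-3x$, and finally $-2x+(2\alpha+1)x=(2\alpha-1)x$ for $\tau=-(2\alpha+1)x$.

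Matching each of these expressions against $ax$ then forces $a=0$, $a=-1$, $a=1$ and $a=2\alpha-1$ respectively, which is exactly the claimed correspondence; substituting each value back into \eqref{retratos1} reproduces verbatim the rows of the table labelled $P_{n}$, $T_{n}$, $U_{n}$ and $C_{n}^{(\alpha)}$, whence the phase-portrait classification established in the preceding Proposition for arbitrary real $a$ applies to these four families as its special cases $a\in\{0,-1,1,2\alpha-1\}$. I expect no genuine obstacle, since the statement is a bookkeeping identity between two tabulations; the only point deserving care is to see why a single scalar $a$ suffices, namely that \eqref{retratos1} carries no constant term in the coefficient of $v$ and therefore realizes the symmetric Jacobi subfamily (the case $\alpha=\beta$, in which the constant $\alpha-\beta$ vanishes). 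Confirming that each listed $\tau$ indeed lies in this symmetric class, equivalently that $\rho'-\tau$ is a multiple of $x$ with no constant part, completes the argument.
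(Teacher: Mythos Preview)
Your argument is correct. The paper itself offers no proof for this remark; it is stated as a bare observation immediately after the phase-portrait proposition, with the correspondence $a\leftrightarrow$ family left implicit. Your verification by computing $\rho'-\tau=-2x-\tau(x)$ for each of the four $\tau$'s and matching the result against $ax$ is exactly the intended reading, only made explicit. There is nothing missing.
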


\begin{prop}
	The phase portrait on the poincar\'e disk of any quadratic polynomial system
	\begin{equation}\label{sis retratos2}
	\left\{
	\begin{array}{rl}
	\dot{v}= &\frac{\lambda_{n}}{\mu}x+av+bvx+\mu v^{2}  \\
	&\\
	\dot{x}= & x \\
	\end{array}
	\right.
	\end{equation}
	with $\mu\neq0$, $\lambda_{n}>0$ and $a,b\in \mathbb{R}$, is topologically equivalent to some of the phase portraits described in the Figure \ref{figsis2}
\end{prop}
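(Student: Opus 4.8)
The plan is to follow the scheme used for the previous proposition: first classify the finite singular points by linearization, then compactify on the Poincar\'e sphere to read off the behaviour at infinity, and finally glue the local pictures into a global portrait, using the triviality of the $x$-dynamics to exclude periodic orbits.

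First I would locate the finite singular points. Since $\dot x=x$, every singular point lies on the invariant line $x=0$, where $\dot v=v(a+\mu v)$; hence the finite singular points are $(0,0)$ and $(-a/\mu,0)$, coinciding exactly when $a=0$. The Jacobian is
$$DX(v,x)=\begin{pmatrix} a+bx+2\mu v & \dfrac{\lambda_{n}}{\mu}+bv\\ 0 & 1\end{pmatrix},$$
so $DX(0,0)$ has eigenvalues $a,1$ and $DX(-a/\mu,0)$ has eigenvalues $-a,1$. For $a\neq0$ both points are hyperbolic and, by Theorem \ref{teo hiper}, one is an unstable node and the other a saddle, their roles interchanging with $\mathrm{sgn}\,a$. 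For $a=0$ the points merge and the origin is semi-hyperbolic with eigenvalues $0,1$; after the linear change $w=v-\tfrac{\lambda_{n}}{\mu}x$ diagonalising the linear part, the centre manifold is $\{x=0\}$ and there $\dot w=\mu w^{2}+\cdots$, so the order $m=2$ is even and the semi-hyperbolic normal-form theorem yields a saddle-node.

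Next I would pass to infinity. With $P_{2}=bvx+\mu v^{2}$ and $Q_{2}=0$, Theorem \ref{icp1} gives $XQ_{2}-YP_{2}=-XY(bY+\mu X)=0$, so the singular points at infinity are $(\pm1,0,0)$, $(0,\pm1,0)$ and, when $b\neq0$, the extra antipodal pair along the direction $(b,-\mu)$. Writing the two charts as in systems (\ref{S1})--(\ref{S2}) and linearising, the points $(\pm1,0,0)$ are nodes (double eigenvalue $\mu$, stability fixed by $\mathrm{sgn}\,\mu$), while $(0,\pm1,0)$ has eigenvalues $-b,0$ and the extra pair has eigenvalues $-\mu,0$. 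The last two are therefore non-elementary: semi-hyperbolic when $b\neq0$, whereas for $b=0$ the point $(0,\pm1,0)$ becomes nilpotent. I would classify each by computing the leading term of the field restricted to its centre manifold and then invoking the semi-hyperbolic theorem (respectively the nilpotent theorem, after passing to nilpotent normal form), which identifies them as saddle-nodes or nodes according to the parity of that leading order.

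Finally I would assemble the global portrait. Because $\dot x=x$, the function $x e^{-t}$ is a Darboux invariant (the line $x=0$ is invariant with cofactor $1$), so $x$ is strictly monotone off $\{x=0\}$ and there are no periodic orbits or limit cycles; the portrait is thus determined by the singular points, their separatrices and the one-dimensional flow on $x=0$. Running through the finitely many cases in $a,\mu,b$ and checking global consistency with the Poincar\'e index theorem, each case reproduces one of the diagrams of Figure \ref{figsis2}. I expect the main obstacle to be exactly the non-elementary points on the equator: obtaining the correct normal forms for the semi-hyperbolic (and, for $b=0$, nilpotent) points and tracking how $\mathrm{sgn}\,\mu$, $\mathrm{sgn}\,a$ and the vanishing of $b$ select the local pictures is the delicate, bookkeeping-heavy step, the remaining linearisations being routine.
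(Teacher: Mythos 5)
Your proposal matches the paper's proof essentially step for step: the same finite analysis (hyperbolic saddle plus unstable node for $a\neq 0$; the shear $\tilde v=v-\tfrac{\lambda_n}{\mu}x$ and the semi-hyperbolic theorem with $g(\tilde v)=\mu\tilde v^2+o(\tilde v^2)$ giving a saddle-node for $a=0$), and the same treatment at infinity (semi-hyperbolic saddle-nodes at the two equatorial points when $b\neq 0$, the nilpotent-point theorem when $b=0$ — the paper computes $m=4$, $n=1$, $m>2n+1$, again a saddle-node — and the node with eigenvalue $\pm\mu$ at the endpoints of the $v$-axis). Your only additions, excluding periodic orbits via the monotonicity of $x$ off the invariant line $x=0$ and an index-consistency check when gluing, are correct minor supplements that the paper leaves implicit.
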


\begin{figure}[h!]
	% Requires \usepackage{graphicx}
	\begin{center}
		\includegraphics[width=6cm]{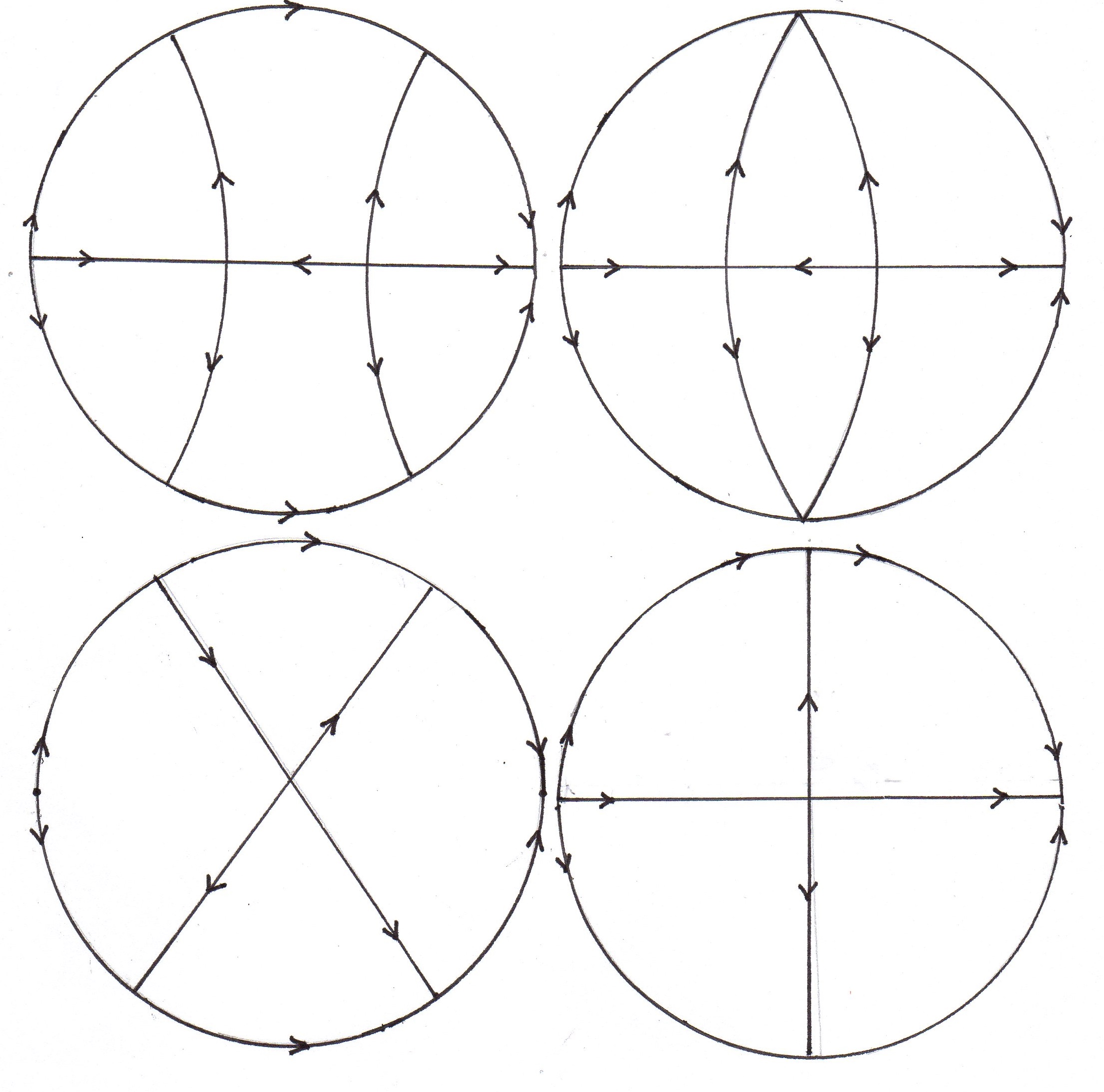}\\
		\caption{Portraits of phase for \ref{sis retratos2}}\label{figsis2}
	\end{center}
\end{figure}

\begin{proof}
	In this system the singular points in the finite plane have the form $(0,0)$ and $(\dfrac{-a}{\mu},0)$. this is, if $a=0$ there is only one singular point and if $a\neq0$ there are two singular points.\\
	The Jacobian Matrix of the system is:
	$$DX(v,x)=\left[
	\begin{array}{cc}
	a+bx+2\mu v & \dfrac{\lambda_{n}}{\mu}x+bv \\
	&\\
	0 & 1 \\
	\end{array}
	\right]
	$$
	\textbf{Case 1:} \emph{Laguerre associate} $a\neq0$.\\
	$$\begin{array}{cc}
	DX(0,0)=\left[
	\begin{array}{cc}
	a & 0 \\
	0 & 1 \\
	\end{array}
	\right]
	&  DX(-a/\mu,0)=\left[
	\begin{array}{cc}
	-a & -ab/\mu \\
	0 & 1 \\
	\end{array}
	\right]
	\end{array}
	$$
	Indistinct of the sign of $a$, in the finite plane there is a saddle point and an unstable node.\\
	
	\textbf{Case 2:} \emph{Laguerre} $a=0$.\\
	$$DX(0,0)=\left[
	\begin{array}{cc}
	0 & 0 \\
	0 & 1 \\
	\end{array}
	\right]$$
	This implies that the origin is a singular semi-hyperbolic point. \\
	Making the transformations:
	$$\begin{array}{cc}
	\tilde{v}=v-\dfrac{\lambda_{n}}{\mu}x, & x=x
	\end{array}
	$$
	we get the following system, which is, topologically equivalent to (\ref{sis retratos2})
	$$\left\{
	\begin{array}{rl}
	\dot{\tilde{v}}=&\dfrac{\lambda_{n}(b+\lambda_{n})}{\mu}x^{2}+(b+2\lambda_{n})\tilde{v}x+\mu v^{2} \\
	& \\
	\dot{x}=&x
	\end{array}
	\right.
	$$
Applying the theorem for semi-hyperbolic points, we take
	$$\begin{array}{lcr}
	A(\tilde{v},x)=\dfrac{\lambda_{n}(b+\lambda_{n})}{\mu}x^{2}+(b+2\lambda_{n})\tilde{v}x+\mu v^{2}&\ \ & B(\tilde{v},0)=0.
	\end{array}
	$$
	Then $x=f(\tilde{v})=0$ is the solution of equation $x+B(\tilde{v},0)=0$, in a neighborhood of origin.\\
	Now,
	$$g(\tilde{v})=A(\tilde{v},0)=\mu \tilde{v}^{2}+o(\tilde{v}^{2}).$$
	therefore, the origin is a saddle-node.\\

	Again the singular points in infinity will be analyzed, using the transformations on the poincar\'e sphere. \\
	The flow defined by the study system \ref{sis retratos2}, on the equator of the Poincaré sphere except $(\pm 1,0,0)$ is topologically equivalent to the flow defined by the system
	$$\left\{
	\begin{array}{rl}
	\dot{v}=& \dfrac{\lambda_{n}}{\mu}z+bv+(a-1)vz+\mu v^{2} \\
	&\\
	\dot{z}= & -z^{2} \\
	\end{array}
	\right.
	$$
	whose singular points are: $(0,0)$ and $(-b/\mu,0)$. If $b\neq0$ there are two singular points, If $b=0$ there is only one singular point.\\
	The Jacobian matrix associated with this last system is:
	\begin{equation}\label{retrato2infin}DX(v,z)=\left[
	\begin{array}{ccc}
	b+(a-1)z+2v\mu & \ \ &\dfrac{\lambda_{n}}{\mu}+(a-1)v \\
	0 & \ \ & -2z \\
	\end{array}
	\right]\end{equation}
	\textbf{Case 1:} \emph{Laguerre and Laguerre associate} $b\neq0$.\\
	$$\begin{array}{lr}
	DX(0,0)=\left[
	\begin{array}{ccc}
	b & \ \ &\dfrac{\lambda_{n}}{\mu} \\
	0 & \ \ & 0 \\
	\end{array}
	\right]  &
	DX(-b/\mu,0)=\left[
	\begin{array}{ccc}
	-b & \ \ &\dfrac{\lambda_{n}+b(1-a)}{\mu} \\
	0 & \ \ & 0 \\
	\end{array}.
	\right]
	\end{array}
	$$
	This is, $(0,0)$ and $(-b/\mu,0)$ they are semi-hyperbolic points.\\
	To express the system (\ref{retrato2infin}) in the canonical form, and thus be able to apply the theorem for semi-hyperbolic points; we perform the following transformations:
	$$\begin{array}{ccc}
	\tilde{v}=\dfrac{\lambda_{n}}{\mu}z+bv & \ \ & z=z
	\end{array}
	$$
	obtaining the following system, which is topologically equivalent to (\ref{retrato2infin})
	$$\left\{
	\begin{array}{rl}
	\dot{\tilde{v}}= & b\tilde{v}+\dfrac{\lambda_{n}(-a+\lambda_{n}/b)}{\mu}z^{2}+(a-1-2\lambda_{n}/b)\tilde{v}z +\dfrac{\mu}{b}\tilde{v}^{2}\\
	&\\
	\dot{z}= & -z^{2} \\
	\end{array}
	\right.
	$$
	where
	$$\begin{array}{ccc}
	A(\tilde{v},z)=-z^{2} & \ \ & B(\tilde{v},z)=\dfrac{\lambda_{n}(-a+\lambda_{n}/b)}{\mu}z^{2}+(a-1-2\lambda_{n}/b)\tilde{v}z +\dfrac{\mu}{b}\tilde{v}^{2}
	\end{array}
	$$
	Let $\tilde{v}=f(z)$ the solution of equation $b\tilde{v}+B(\tilde{v},z)=0$ in a neighborhood of origin. then
	$$g(z)=A(f(x),z)=-z^{2}$$
	So $(0,0)$ is a saddle-node.\\
	
	For the point $(-b/\mu,0)$ we will successively the following transformations
	$$\begin{array}{ccc}
	\tilde{v}=v+\dfrac{b}{\mu} & \ \ & z=z \\
	& \ \ &  \\
	\bar{v}=\dfrac{\lambda_{n}+b(1-a)}{\mu}z-b\tilde{v} & \ \ & z=z
	\end{array}
	$$
	obtaining the system topologically equivalent to \ref{retrato2infin}
	$$\left\{
	\begin{array}{rl}
	\dot{\bar{v}}= & -b\bar{v}+B(\bar{v},z) \\
	&  \\
	\dot{z}= & -z^{2} \\
	\end{array}
	\right.
	$$
	where
	$$B(0,0)=DB(0,0)=0$$
	and
	$$A(\bar{v},z)=-z^{2}.$$
	Let $\bar{v}=f(z)$ the solution of the equation $-b\bar{v}+B(\bar{v},z)=0$ in a neighborhood of the origin of this latter system. Then
	$$g(z)=A(f(z),z)=-z^{2}.$$
	Therefore, the point $(-b/\mu,0)$ is a saddle-node.\\

	\textbf{Case 2:} $b=0$.\\
	$$DX(0,0)=\left[
	\begin{array}{ccc}
	0 & \ \ &\dfrac{\lambda_{n}}{\mu} \\
	0 & \ \ & 0 \\
	\end{array}
	\right] $$
	That is, the origin is a unique nilpotent point for this system. \\
	We make the transformation
	$$\begin{array}{lcr}
	\tilde{v}=\dfrac{\mu}{\lambda_{n}}v, & \ \ & z=z
	\end{array}
	$$
	obtaining the system topologically equivalent to the system (\ref{retrato2infin}):
	$$\left\{
	\begin{array}{rl}
	\dot{\tilde{v}}= & z+(a-1)\tilde{v}z+\lambda_{n}\tilde{v}^{2}  \\
	&   \\
	\dot{z}= & -z^{2}  \\
	\end{array}
	\right.
	$$
This last system fulfills the conditions of theorem for singular nilpotent points 
where
	$$\begin{array}{lcr}
	A(\tilde{v},z)=(a-1)\tilde{v}z+\lambda_{n}\tilde{v}^{2} & \ \  & B(\tilde{v},z)= -z^{2}
	\end{array}
	$$
	Otherwise, $z=f(\tilde{v})=(1-\lambda_{n}-a)\tilde{v}^{2}+0(\tilde{v}^{2})$ is the solution to the equation $$z+A(\tilde{v},z)=0$$ in a neighborhood of the origin.\\
	Then,
	$$\begin{array}{rl}
	F(\tilde{v})=&B(\tilde{v},f(\tilde{v}))=-(1-\lambda_{n}-a)^{2}\tilde{v}^{4}+o(\tilde{v}^{4}) \\
	&\\
	G(\tilde{v})=&\left(\dfrac{\partial A}{\partial \tilde{v}}+\dfrac{\partial B}{\partial z}\right)(\tilde{v},f(\tilde{v}))=2\lambda_{n}\tilde{v}+o(\tilde{v})
	\end{array}
	$$
	In this case $m=4$ y $n=1$. Since $m$ is even and $m>2n+1$ the origin is a saddle-node.\\

	For the infinity, the flow defined by the system, on the equator Poincar\'e sphere excepting $(0,\pm 1,0)$ is topologically equivalent to the flow defined by the system
	$$\left\{\begin{array}{rl}
	\dot{x}=& (1-a)xz-\frac{\lambda_{n}}{\mu}x^{2}z-bx^{2}-\mu x\\
	&\\
	\dot{z}=& -\frac{\lambda_{n}}{\mu}xz^{2}-az^{2}-xz-\mu z,
	\end{array}
	\right.$$
	in which it is only necessary to study the behavior of the singular point, the origin.
	$$\begin{array}{rl}
	DX(x,z)= & \left[
	\begin{array}{cc}
	(1-a)z-2\frac{\lambda_{n}}{\mu}xz-2bx-\mu  & (1-a)x-\frac{\lambda_{n}}{\mu}x^{2} \\
	&\\
	-\frac{\lambda_{n}}{\mu}z^{2}-z &  -2\frac{\lambda_{n}}{\mu}xz-2az-x-\mu  \\
	\end{array}.
	\right]
	\end{array}$$
	In $(0,0)$
	$$\begin{array}{rl}
	DX(0,0)= & \left[
	\begin{array}{cc}
	-\mu & 0 \\
	0 & -\mu \\
	\end{array}
	\right]
	\end{array}
	$$
that is, the origin of this last system is a node and its stability depends on the sign of $\mu$.

\end{proof}

\begin{obs}
	In the previous proposition, for specific values of the parameters $a$ and $b$, the phase portraits for the polynomial systems associated with the following orthogonal polynomials are obtained:
	$$\begin{array}{lccl}
	a=0,&b=1 & \ \ &L_{n}  \\
	a=-\alpha,&b=1 & \ \ &L_{n}^{(\alpha)}
	\end{array}$$
\end{obs}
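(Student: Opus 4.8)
The plan is to verify by direct substitution that the two indicated parameter specializations reduce the general system \eqref{sis retratos2} to the polynomial systems associated with $L_n$ and $L_n^{(\alpha)}$ recorded in the table of the Main Results section, and then to invoke the preceding proposition to transfer the phase-portrait classification to these two families. No new dynamical analysis is needed: everything rests on recognizing the Laguerre systems as instances of \eqref{sis retratos2}.

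First I would recall the second-component equation $\dot x = x$ together with the general first component
$$\dot v = \frac{\lambda_n}{\mu}\,x + a\,v + b\,v x + \mu\,v^2,$$
keeping the structural constraints $\mu \neq 0$ and $\lambda_n > 0$ in view. From the first table of the paper, both Laguerre families have $\rho(x)=x$ and $\lambda_n = n$, so the constraint $\lambda_n > 0$ is automatically satisfied for $n \geq 1$; this guarantees that the hypotheses of the previous proposition hold for each specialization I am about to make.

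Next I would substitute $a = 0$, $b = 1$, obtaining
$$\dot v = \frac{\lambda_n}{\mu}\,x + v x + \mu\,v^2,$$
which coincides verbatim with the $L_n$ row of the table of associated polynomial systems. Similarly, substituting $a = -\alpha$, $b = 1$ gives
$$\dot v = \frac{\lambda_n}{\mu}\,x + (-\alpha + x)\,v + \mu\,v^2,$$
matching the $L_n^{(\alpha)}$ row, since the grouping $a\,v + b\,v x = -\alpha\,v + v x = (-\alpha + x)\,v$ reproduces exactly the coefficient $\rho' - \tau = -\alpha + x$ attached to $v$ in that family. Thus both Laguerre systems are genuine members of the family \eqref{sis retratos2}.

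Finally, because the previous proposition classifies the phase portrait on the Poincaré disk for every admissible pair $(a,b)$ with $\mu \neq 0$ and $\lambda_n > 0$, and both specializations fall within that admissible range, the phase portraits for the systems associated with $L_n$ and $L_n^{(\alpha)}$ are precisely those displayed in Figure \ref{figsis2}. The only point requiring any care—and the mildest potential obstacle—is the coefficient bookkeeping: one must identify the coefficient of $v$ with $a$ and the coefficient of $vx$ with $b$ in the correct pairing. The regrouping $(-\alpha + x)\,v = a\,v + b\,v x$ settles this unambiguously, so the verification is purely routine and the remark follows.
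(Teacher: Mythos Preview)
Your proposal is correct and matches the paper's treatment: the statement is a remark given without proof in the paper, and your direct substitution into system \eqref{sis retratos2} followed by comparison with the table of associated polynomial systems is precisely the routine verification that underlies it.
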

To finish this section we compute the differential Galois group and the elements of Darboux integrability to the quadratic polynomial vector field related with Chebyshev differential equation.
\begin{prop}
	For the Chebyshev differential equation
	\begin{equation}\label{ecu. chebychef}y''-\dfrac{x}{1-x^{2}}y'+\dfrac{\lambda_{n}}{1-x^{2}}y=0\end{equation}
	where $\lambda_{n}=n^{2}$, $n\in\mathbb{N}$,
	the following statement are true:
	\begin{enumerate}
		\item $G(L/K)$ of the Chebyshev equation is isomorphic to $\mathbb{Z}_{2}$, where $K=\mathbb{C}(x)$
		\item The first integrals of the fields
		\begin{equation}\label{sistema redu}\left\{
		\begin{array}{rl}
		\dot{w}= & -2-4\lambda_{n}+(4\lambda_{n}-1)x^{2}-4(1-x^{2})^{2}w^{2} \\
		&  \\
		\dot{x}= & 4(1-x^{2})^{2} \\
		\end{array}
		\right.
		\end{equation}
		and
		\begin{equation}\label{sis asociado tchebichef}\left\{
		\begin{array}{rl}
		\dot{v}= & \dfrac{\lambda_{n}}{\mu}(1-x^{2})-xv+\mu v^{2} \\
		&  \\
		\dot{x}= &1-x^{2} \\
		\end{array}
		\right.
		\end{equation}
		associated with the Chebyshev equation, are:
		$$\begin{array}{rl}
		I(w,z)= &\dfrac{-w+\dfrac{U_{n-1}'}{U_{n-1}}-\dfrac{3x}{2(1-x^{2})}}{-w+\dfrac{T_{n}'}{T_{n}}-\dfrac{x}{2(1-x^{2})}}\cdot\dfrac{U_{n-1}}{T_{n}}\sqrt{1-x^{2}} \\
		& and  \\
		I(v,z)= &\dfrac{U_{n-1}'(1-x^{2})+U_{n-1}(\mu v-x)}{T'_{n}(1-x^{2})+\mu T_{n}vx}\cdot\sqrt{1-x^{2}}
		\end{array}
		$$
		
	\end{enumerate}
\end{prop}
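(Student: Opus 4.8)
The plan is to handle the two parts separately, using that \eqref{ecu. chebychef} is solvable in closed form. For part (1) I would first produce a fundamental system: with $x=\cos\theta$ the equation becomes $\ddot{y}+n^{2}y=0$, so for $n\ge 1$ a basis of solutions is $y_{1}=T_{n}(x)=\cos(n\arccos x)$ and $y_{2}=\sqrt{1-x^{2}}\,U_{n-1}(x)=\sin(n\arccos x)$. Since $T_{n},U_{n-1}\in K=\mathbb{C}(x)$, the only irrational quantity adjoined by the solutions and their derivatives is $\sqrt{1-x^{2}}$, so the Picard--Vessiot extension is $L=K(\sqrt{1-x^{2}})$. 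Because $1-x^{2}=(1-x)(1+x)$ has simple zeros it is not a square in $\mathbb{C}(x)$, hence $[L:K]=2$; the extension is algebraic and $\mathbb{C}$ is algebraically closed, so no new constants appear and $L$ is genuinely a Picard--Vessiot extension. The only differential $K$-automorphisms send $\sqrt{1-x^{2}}\mapsto\pm\sqrt{1-x^{2}}$, i.e. act as $\mathrm{diag}(1,\pm 1)$ on $\{y_{1},y_{2}\}$, giving $G(L/K)\cong\mathbb{Z}_{2}$.

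For part (2) the unifying idea is that each associated foliation is a Riccati equation whose two particular solutions are built from $y_{1}$ and $y_{2}$. For system \eqref{sis asociado tchebichef}, which is precisely the system of the invariant-curve theorem proved above with $\rho=1-x^{2}$, $\tau=-x$, I would use that $f_{1}=\mu v\,T_{n}+(1-x^{2})T_{n}'$ is an invariant algebraic curve with cofactor $K_{1}=\rho'+\mu v-\tau=\mu v-x$. The key observation is that the cofactor computation in that proof only used that $y$ solves \eqref{HE}; applying it to $y_{2}$ instead and factoring out $\sqrt{1-x^{2}}$ gives
\[
f_{2}=\sqrt{1-x^{2}}\,\bigl[(1-x^{2})U_{n-1}'+U_{n-1}(\mu v-x)\bigr],
\]
again with cofactor $\mu v-x$. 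As $f_{1}$ and $f_{2}$ share their cofactor, $X(f_{2}/f_{1})=(K_{2}-K_{1})\,f_{2}/f_{1}=0$, so $I=f_{2}/f_{1}$ is a first integral; this is the stated $I(v,z)$ after setting $z=x$.

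For system \eqref{sistema redu} I would first recognize it as the Riccati form of the reduced Chebyshev equation. Writing \eqref{ecu. chebychef} as $y''+py'+qy=0$ with $p=-x/(1-x^{2})$, $q=\lambda_{n}/(1-x^{2})$, the substitution $\eta=(1-x^{2})^{1/4}y$ removes the first-order term and yields $\eta''=r\,\eta$ with $r=\tfrac14 p^{2}+\tfrac12 p'-q$; a direct check shows that the foliation $w'=r-w^{2}$ of the Riccati variable $w=\eta'/\eta$ is exactly \eqref{sistema redu}. The two solutions become $\eta_{1}=(1-x^{2})^{1/4}T_{n}$ and $\eta_{2}=(1-x^{2})^{3/4}U_{n-1}$, giving the particular Riccati solutions $w_{1}=T_{n}'/T_{n}-\tfrac{x}{2(1-x^{2})}$ and $w_{2}=U_{n-1}'/U_{n-1}-\tfrac{3x}{2(1-x^{2})}$. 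From $w'=r-w^{2}$ one gets $\frac{d}{dx}\ln\frac{w-w_{1}}{w-w_{2}}=w_{2}-w_{1}$, and since $\int(w_{2}-w_{1})\,dx=\ln\bigl(\tfrac{U_{n-1}}{T_{n}}\sqrt{1-x^{2}}\bigr)$, the quantity $\frac{w-w_{2}}{w-w_{1}}\cdot\tfrac{U_{n-1}}{T_{n}}\sqrt{1-x^{2}}$ is constant on solutions, which is the stated $I(w,z)$.

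The conceptual content is short: part (1) is a degree count for $\sqrt{1-x^{2}}$ over $\mathbb{C}(x)$, and part (2) is the standard fact that a ratio of two invariant curves with equal cofactors (equivalently, the cross-ratio construction for a Riccati equation with two known solutions) is a first integral. I expect the main obstacle to be purely the bookkeeping of the explicit closed forms: correctly pinning down the second solution $\sqrt{1-x^{2}}\,U_{n-1}$, carrying the fractional powers $(1-x^{2})^{1/4}$ and $(1-x^{2})^{3/4}$ through the reduction, and tracking the signs and the $\sqrt{1-x^{2}}$ prefactors so that the computed invariants match the stated $I(v,z)$ and $I(w,z)$ -- in particular recognizing the evident typos (the variable $z$ should be $x$, and the denominator of $I(v,z)$ should read $(1-x^{2})T_{n}'+\mu v\,T_{n}$ without the extra factor $x$).
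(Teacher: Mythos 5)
Your proposal is correct, and it coincides with the paper's proof in most of its structure, with one genuinely different step. Part (1) is the paper's argument almost verbatim: same fundamental system $y_{1}=T_{n}$, $y_{2}=\sqrt{1-x^{2}}\,U_{n-1}$, same extension $L=K(\sqrt{1-x^{2}})$, same action $\mathrm{diag}(1,\pm 1)$; you are slightly more careful than the paper in justifying that $L$ is the Picard--Vessiot field ($[L:K]=2$, no new constants), which the paper takes for granted. For $I(w,z)$ you also follow the paper's route (reduction $z=y(1-x^{2})^{1/4}$ to $z''=rz$, the two particular Riccati solutions $w_{1},w_{2}$, and the two-solution first-integral formula), the only difference being that the paper invokes Lemma 1 of the cited reference of Acosta-Hum\'anez and Pantazi for $I=\frac{-w+w_{2}}{-w+w_{1}}e^{\int(w_{2}-w_{1})dx}$, whereas you prove it inline from $\frac{d}{dx}\ln\frac{w-w_{2}}{w-w_{1}}=w_{1}-w_{2}$ --- a self-contained improvement. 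Where you genuinely diverge is $I(v,z)$: the paper obtains it by relating the two foliations through the substitution $w=-\frac{x}{2(1-x^{2})}-\frac{\mu v}{1-x^{2}}$ and simplifying $I(w,x)$, while you observe that the cofactor identity $Xf=(\rho'+\mu v-\tau)f$ for $f=\mu v\,y+\rho y'$ from the paper's earlier invariant-curve theorem holds for \emph{any} solution of the hypergeometric equation, not just polynomial ones, apply it to $y_{2}=\sqrt{1-x^{2}}\,U_{n-1}$, and take the ratio $f_{2}/f_{1}$ of two invariants with equal cofactor $\mu v-x$. This is a valid observation (the algebra in that proof never uses polynomiality of $y$), and it buys a derivation that is both shorter and more unified with the paper's Darboux theme, exhibiting $I(v,z)$ directly as a Darbouxian quotient instead of an after-the-fact simplification; the paper's substitution route, in exchange, makes the link between the two systems' integrals explicit. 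Your flagged corrections are right and are confirmed by the paper's own computation: carrying out the paper's simplification yields denominator $T_{n}'(1-x^{2})+\mu T_{n}v$, so the printed factor $x$ in $\mu T_{n}vx$ is a typo, as is $z$ for $x$ in $I(w,z)$, $I(v,z)$.
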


\begin{proof}
	\begin{enumerate}
		\item It is known that $y_{1}=T_{n}$ , $y_{2}=U_{n-1}\sqrt{1-x^{2}}$ are two linearly independent solutions of the equation (\ref{ecu. chebychef}). If we take the differential body $K=\mathbb{C}(x)$ of all the rational functions of variable $x$, we consider the extension of the body $L=K[\sqrt{1-x^{2}}]$. To calculate the differential Galois group of the equation
		(\ref{ecu. chebychef}) all differential automorphisms in the extension must be calculated $L$. That is, find a matrix $$A_{\phi}=\left[
		\begin{array}{cc}
		a & b \\
		c & d \\
		\end{array}
		\right]
		$$
		such that
		$$\phi\left[
		\begin{array}{c}
		y_{1}\\
		y_{2} \\
		\end{array}
		\right]=A_{\phi}\left[
		\begin{array}{c}
		y_{1}\\
		y_{2} \\
		\end{array}.
		\right]
		$$
		By matrix operations we have:
		$$\begin{array}{lcr}
		\phi(y_{1})=ay_{1}+by_{2}, & \  \ &  \phi(y_{2})=cy_{1}+dy_{2}
		\end{array}
		$$
		On the other hand, $y_{1},y_{2}\in \mathbb{C}(x)$ and $\phi$ are automorphisms,then we get
		$$\begin{array}{ccc}
		\phi(y_{1})=y_{1}, & \ \ & \phi(y_{2})=cy_{2}
		\end{array}
		$$
		when $c^{2}=1$. Then we can conclude that
		$$A_{\phi}=\left[
		\begin{array}{cc}
		1 & 0 \\
		0 & c \\
		\end{array}
		\right]
		$$
		This is,
		$$D(L/K)\cong\left\{\left[
		\begin{array}{cc}
		1 & 0 \\
		0 & 1 \\
		\end{array}
		\right],\left[
		\begin{array}{cc}
		1 & 0 \\
		0 & -1 \\
		\end{array}
		\right]
		\right\}\cong \mathbb{Z}_{2}$$
		\item If in the equation (\ref{ecu. chebychef}) we consider $b_{1}(x)=\dfrac{-x}{1-x^{2}}$ y $b_{0}(x)=\dfrac{\lambda_{n}}{1-x^{2}}$, then transformation $$y=ze^{-\frac{1}{2}\int b_{1}(x)dx}$$ allows us to obtain the reduced second order equation
		\begin{equation}\label{redu de 2 grado}
		z'' =\left(\dfrac{-2-x^{2}-4\lambda_{n}(1-x^{2})}{4(1-x^{2})^{2}}\right)z,
		\end{equation}
		with $$z=y(1-x^{2})^{\frac{1}{4}}.$$
Since $y_{1}=T_{n}$ and $y_{2}=U_{n-1}$ are linearly independent solutions of the Chebyshev equation, then:
		$$\begin{array}{lcr}
		z_{1}=T_{n}(1-x^{2})^{\frac{1}{4}}, & \  \ & z_{2}=U_{n-1}(1-x^{2})^{\frac{3}{4}},
		\end{array}
		$$
		are linearly independent solutions of the reduced second order equation (\ref{redu de 2 grado}). \\
		
		On the other hand, the differential equation associated with the system (\ref{sistema redu}) have the form:
		
		$$w'=\dfrac{ -2-4\lambda_{n}+(-1+4\lambda_{n})x^{2}}{4(1-x^{2})^{2}}-w^{2}$$
		
		and applying the transformation $w=\dfrac{z'}{z}$, is equivalent to the equation (\ref{redu de 2 grado}). from this the solutions of this last equation are given by:
		
		$$\begin{array}{c}
		w_{1}=\dfrac{z'_{1}}{z_{1}}=(lnz_{1})'=\dfrac{T'_{n}}{T_{n}}-\dfrac{x}{2(1-x^{2})}\\
		and\\
		w_{2}=\dfrac{z'_{2}}{z_{2}}=(lnz_{2})'=\dfrac{U'_{n-1}}{U_{n-1}}-\dfrac{3x}{2(1-x^{2})}.
		\end{array}
		$$
		Then. by Lemma 1 of \cite{acpan}, we get that the first integral of the system (\ref{sistema redu}) have the form:
		$$I(w,x)=\dfrac{-w(x)+w_{2}(x)}{-w(x)+w_{1}(x)}\cdot e^{\left(\int(w_{2}(x)-w_{1}(x))dx\right)}.$$
		This is,
		$$I(w,x)=\dfrac{-w+\dfrac{U_{n-1}'}{U_{n-1}}-\dfrac{3x}{2(1-x^{2})}}{-w+\dfrac{T_{n}'}{T_{n}}-\dfrac{x}{2(1-x^{2})}}\cdot\dfrac{U_{n-1}}{T_{n}}\sqrt{1-x^{2}}$$
		
		Now to find the first integral of the system (\ref{sis asociado tchebichef}),  can be noticed that the foliation of this system and the foliation of the system (\ref{sistema redu}) are
		$$\begin{array}{rl}
		v'=& \dfrac{\lambda_{n}}{\mu}-\dfrac{x}{1-x^{2}}v+\dfrac{\mu}{1-x^{2}}v^{2}, \\
		& \\
		w'=&\dfrac{-2-4\lambda_{n}+(4\lambda_{n}-1)x^{2}}{4(1-x^{2})^{2}}-w^{2}.
		\end{array}
		$$
		which are related through the transformation
		$$w=-\dfrac{x}{2(1-x^{2})}-\dfrac{\mu v}{1-x^{2}}.$$
		Therefore, replacing we obtained 
		
		$$I(v,x)=\dfrac{\dfrac{x}{2(1-x^{2})}+\dfrac{\mu v}{1-x^{2}}+\dfrac{U_{n-1}'}{U_{n-1}}-\dfrac{3x}{2(1-x^{2})}}{\dfrac{x}{2(1-x^{2})}+\dfrac{\mu v}{1-x^{2}}+\dfrac{T_{n}'}{T_{n}}-\dfrac{x}{2(1-x^{2})}}\cdot\dfrac{U_{n-1}}{T_{n}}\sqrt{1-x^{2}}$$
	after simplifying we get the first integral described for the system
		(\ref{sis asociado tchebichef}).
	\end{enumerate}
\end{proof}

\section*{Final Remarks}
In this paper we studied algebraically through differential Galois theory and Darboux theory of integrability, as well qualitatively through the analysis of critical points, some quadratic polynomial vector fields related with classical orthogonal polynomials.

\section*{Acknowledgements}
The author thank to Camilo Sanabria and Dmitri Karp by their useful comments and suggestions.

%________________________________________________________


\begin{thebibliography}{18}
%%-----------------------------------------------------------------------------
%%\newcommand{\au}[1]{{\sc #1}}
%\newcommand{\ar}[1]{``#1''}
%\newcommand{\lb}[1]{{\sl``#1''\/}}
%\newcommand{\co}[1]{{\sl#1\/}}
%\newcommand{\rv}[1]{{\it#1\/}}
%\newcommand{\vl}[1]{{\bf#1\/}}
%\newcommand{\vol}[1]{vol.~\vl{#1}}
%\newcommand{\nnu}[1]{\mbox{n.~#1}}
%\newcommand{\pg}[1]{\mbox{#1}}
%%---------------------------------------------------------------------------
%
%%==============================================================================


\bibitem{acbook}
P. B. Acosta-Hum\'anez,
\emph{Galoisian Approach to Supersymmetric Quantum Mechanics: The Integrability Analysis of the Schr\"odinger Equation by means of Differential Galois Theory}, VDM Verlag, Dr. M\"uller,  Berl\'{\i}n, 2010.

\bibitem{aclm}
P. B. Acosta-Humanez, \emph{La Teor\'{\i}a de Morales-Ramis y el Algoritmo de Kovacic},
Lecturas Matem\'aticas, \textbf{NE} (2006), 21--56

\bibitem{almp2}
P. B. Acosta-Hum\'anez, J.T. L\'azaro, J.J. Morales-Ruiz \& Ch. Pantazi,
\emph{Differential Galois theory and non-integrability of planar polynomial vector fields},
Journal of Differential Equations \textbf{264} (2018), 7183--7212

\bibitem{almp}
P. B. Acosta-Hum\'anez, J.T. L\'azaro, J.J. Morales-Ruiz \& Ch. Pantazi,
\emph{On the integrability of polynomial vector fields in the plane by means of Picard-Vessiot theory},
Discrete and Continuous Dynamical Systems - Series A (DCDS-A),  \textbf{35} (2015),  1767--1800

\bibitem{acmowe} P. B. Acosta-Hum\'anez, J.J. Morales-Ruiz \& J.-A. Weil, {\it Galoisian approach to integrability of Schr\"odinger equation}, Report on Mathematical Physics, \textbf{67}, (2011) 305--374
\bibitem{acpan} P. B. Acosta-Hum\'anez \& Ch. Pantazi,
\emph{Darboux Integrals for Schr\"odinger Planar Vector Fields via Darboux Transformations}, Symmetry, Integrability and Geometry: Methods and Applications (SIGMA),  \textbf{8}, (2012), 043, 26 pages.

\bibitem{acpe2} {P. Acosta-Hum\'anez \& J. H. P\'erez}, \emph{Una introducci\'on a la  Teor\'{\i}a de Galois diferencial}, Bol. Mat. (N.S.), \textbf{11} (2004), 138--149.

\bibitem{ARR} {P. Acosta-Hum\'anez, A. Reyes, \& J. Rodr\'iguez}, \emph{Galoisian and qualitative approaches to linear Polyanin-Zaitsev vector fields}, Open Math. 2018; 16: 1204--1217

\bibitem{ARRg} {P. Acosta-Hum\'anez, A. Reyes, \& J. Rodr\'iguez}, \emph{Algebraic and qualitative remarks about the family} $yy'=(\alpha x^{m+k-1}+\beta x^{m-k-1})y+\gamma x^{2m-2k-1}$, Preprint 2018, \href{https://arxiv.org/pdf/1807.03551.pdf}{ArXiv:1807.03551}

\bibitem{chihara} T. S. Chihara, \emph{An Introduction to Orthogonal Polynomials}, Mathematics and its Applications, \textbf{13}, New York: Gordon and Breach Science Publishers, 1978.
\bibitem{crha} T. Crespo \& Z. Hajto, \emph{Algebraic Groups and Differential Galois Theory}, \textbf{122}, Graduate Studies in Mathematics, American Mathematical Society, 2011.
\bibitem{DM}F. Dumortier, J. Llibre \& J. C. Artés. \emph{Qualitative Theory of Planar Differential Systems}.
\bibitem{HE} H. Edwards, \emph{Galois Theory}, Graduate Text in Mathematics \textbf{101}, Springer, 1984
\bibitem{120} J. Guckenheimer , K. Hoffman \& W. Weckesser, {\it The forced Van der Pol equation I: The slow flow and its bifurcations}, SIAM J. Applied Dynamical Systems, \textbf{2} (2003), 1--35.

\bibitem{ismail} M.E.H. Ismail, {\it Classical and Quantum Orthogonal Polynomials in One Variable}, Encyclopedia of Mathematics and its Applications \textbf{98}, Cambridge University Press, 2005

\bibitem{121} T. Kapitaniak, {\it Chaos for Engineers: Theory, Applications and Control}, Springer, Berlin, Germany, 1998.

\bibitem{LZ} J. Llibre \& X. Xhang, {\it Darboux theory of integrability in $\mathbb{C}^n$ taking into account the multiplicity}, Journal of Differential Equations, \textbf{246} (2009), 541--551

\bibitem{mo} J.J. Morales-Ruiz, \emph{Differential Galois Theory and Non-Integrability of Hamiltonian Systems}, Birkh\"auser, Basel, 1999.

\bibitem{mora} J.J Morales-Ruiz \& J.-P. Ramis, \emph{Galoisian obstructions to integrability of Hamiltonian systems}, Methods and Applications of Analysis\textbf{8}, (2001) 33--96.

\bibitem{119} J. Nagumo, S. Arimoto and S. Yoshizawa, {\it An active pulse transmission line simulating nerve axon}, Proc. IRE, 1962, 50, 2061-2070.

\bibitem{PK} L. Perko. \emph{Differential Equations and Dynamical Systems},  Third Edition, Springer-Verlag, New York,  2001.

\bibitem{VDP}  B. Van der Pol \& J. Van der Mark,  {\it Frequency demultiplication}, Nature, \textbf{120} (1927), 363--364.

\bibitem{vasi} M. Van der Put  \& M. Singer, {\it Galois theory of linear differential equations},  Springer-Verlag, New York, 2003.


\end{thebibliography}
\end{document}